\begin{document}

\begin{frontmatter}

\title{Adaptive Multinomial Matrix Completion} 
\runtitle{Adaptive Multinomial Matrix Completion}

\begin{aug}

  \author{\fnms{Olga} \snm{Klopp}\ead[label=e2]{olga.klopp@math.cnrs.fr}}
  \address{CREST and MODAL'X, Universit\'e Paris Ouest\\ \printead{e2}}

  \author{\fnms{Jean} \snm{Lafond}\ead[label=e1]{jean.lafond@telecom-paristech.fr}}
  \address{Institut Mines-T\'el\'ecom,
T\'el\'ecom ParisTech, CNRS LTCI\\ \printead{e1}}

  \author{\fnms{\'Eric} \snm{Moulines} \ead[label=e3]{moulines@telecom-paristech.fr}}
  \address{Institut Mines-T\'el\'ecom,
T\'el\'ecom ParisTech, CNRS LTCI\\ \printead{e3}}

 \author{\fnms{Joseph} \snm{Salmon}\ead[label=e4]{joseph.salmon@telecom-paristech.fr}}
  \address{Institut Mines-T\'el\'ecom,
T\'el\'ecom ParisTech, CNRS LTCI\\ \printead{e4}}

\end{aug}

\begin{abstract}
The task of estimating a matrix given a sample of observed
entries is known as the \emph{matrix completion problem}.
Most works on matrix
completion have focused on recovering an unknown real-valued low-rank
matrix from a  random sample of its entries.
Here, we investigate the case
of highly quantized observations when the measurements can take only a
small number of values. These quantized outputs are generated according
to a probability distribution parametrized by the unknown matrix of interest. 
This model corresponds, for example, to ratings in recommender
systems or labels in multi-class classification. We consider a general, non-uniform, 
sampling scheme and give theoretical guarantees on the performance
of a constrained, nuclear norm penalized maximum likelihood
estimator. One important advantage of this estimator is that it does not
require knowledge of the rank or an upper bound on the nuclear norm
of the unknown matrix and, thus, it is adaptive. We provide
lower bounds showing that our estimator is minimax optimal.  
An efficient algorithm based on lifted coordinate
gradient descent  is proposed to compute the estimator. A limited Monte-Carlo experiment, using both simulated and 
real data is provided to support our claims.
 
\end{abstract}

\begin{keyword}[class=MSC]
\kwd[Primary ]{62J02}
\kwd{62J99}
\kwd[; secondary ]{62H12,60B20}
\end{keyword}

\begin{keyword}
\kwd{Low rank matrix estimation; matrix completion; multinomial model}
\end{keyword}


\end{frontmatter}

\maketitle

\section{Introduction}

The matrix completion problem arises in a wide range of applications such as
image processing
\cite{Hui_Chaoqiang_Zuowei_Yuhong10,Ji_Musialski_Wonka_Jieping13,Xu_Jiasong_Lu_Yang_Lofti_Huazhong14},
quantum state tomography \cite{Gross11}, seismic data reconstruction \cite{Yang_Ma_Osher13} or
recommender systems \cite{Koren_Bell_Volinsky09,Bobadilla_Ortega_Hernando_Gutierrez13}.
It consists in recovering
all the entries of an unknown matrix, based on partial, random and, possibly,
noisy observations of its entries.
 Of course, since only a small proportion of
entries is observed, the problem of matrix completion is, in general, ill-posed
and requires a penalization favoring low rank solutions.
In the classical setting,
the entries are assumed to be real valued and observed in presence of
additive, homoscedastic Gaussian or sub-Gaussian noise.
In this framework, the matrix completion problem can be solved provided
that the unknown matrix is low rank, either exactly or approximately;
see \cite{Candes_Plan10,Keshavan_Montanari_Oh10,Koltchinskii_Tsybakov_Lounici11,Negahban_Wainwright12,Cai_Zhou13,Klopp14}
and the references therein.
Most commonly used methods amount to solve a least square program
under a rank constraint or its convex relaxation provided
by the nuclear (or trace) norm \cite{Fazel02}.




In this paper, we consider a statistical model where instead of observing
a real-valued entry of an unknown matrix we are now able to see only highly
quantized outputs. These discrete observations are generated according to a
probability distribution which is parameterized by the corresponding entry of the
unknown low-rank matrix.
This model is well suited to the analysis of voting patterns, preference ratings, or recovery of incomplete survey data,
 where typical survey responses are of the form  ``true/false'', ``yes/no'' or  ``agree/disagree/no opinion'' for instance.\\
The problem of matrix completion over a finite alphabet has received much less
attention than the traditional unquantized matrix completion.
One-bit matrix completion, corresponding to the case of binary, i.e. yes/no, observations, was
first introduced by \cite{Davenport_Plan_VandenBerg_Wootters12}.
In this paper, the first theoretical
guarantees on the performance of a nuclear-norm constrained maximum 
likelihood estimator are given. The sampling model considered in \cite{Davenport_Plan_VandenBerg_Wootters12}
assumes that the entries are
sampled uniformly at random. Unfortunately, this condition
is unrealistic for recommender system applications: in such a context some users are
more active than others and popular items are rated more frequently.
Another important issue is that the method of \cite{Davenport_Plan_VandenBerg_Wootters12} requires the knowledge of
an upper bound on the nuclear norm or on the rank of the unknown matrix.
Such information is usually not available in applications. On the other hand,
our estimator yields a faster rate of convergence than those obtained in \cite{Davenport_Plan_VandenBerg_Wootters12}.

One-bit matrix completion was further considered by \cite{Cai_Zhou14}
where a max-norm constrained maximum likelihood estimate is considered. This
method allows more general non-uniform sampling schemes but still requires an
upper bound on the max-norm of the unknown matrix. Here again, the rates
of convergence obtained in \cite{Cai_Zhou14} are slower than the rate of convergence of our
estimator. Recently, \cite{Gunasekar_Ravikumar_Gosh14} consider general exponential family
distributions, which cover some distributions over finite sets. Their method,
unlike our estimator, requires the knowledge of the “spikiness ratio” (usually
unknown) and the uniform sampling scheme.

In the present paper, we consider 
a maximum likelihood estimator with nuclear-norm penalization.
Our method allows us to
consider general sampling scheme and only requires the knowledge of an upper
bound on the maximum absolute value of the entries of the unknown matrix.
All the previous
works on this model also require the knowledge of this bound together with
some additional (and more difficult to obtain) information on the unknown
matrix.

The paper is organized as follows. In \Cref{subsec:bin},
the one-bit matrix completion is first discussed
 and our estimator is introduced. We establish upper
bounds both on the Frobenius norm between the unknown true matrix
and the proposed estimator
and on the associated Kullback-Leibler divergence.
In \Cref{subsec:lower} lower bounds are established,
showing that our upper bounds are minimax optimal up to logarithmic factors.
Then, the one-bit matrix completion problem is extended to the
case of a more general finite alphabet.
In \Cref{sec:implementation} an implementation based on the lifted coordinate descent
algorithm recently introduced in
\cite{Dudik_Harchaoui_Malick12} is proposed. A limited Monte Carlo experiment supporting our claims 
is then presented in \Cref{sec:numerical-experiment}.

\subsection*{Notations}
For any integers $n,m_1,m_2>0$, $[n] \eqdef \{1,\dots,n\}$,
 $m_1 \vee m_2 \eqdef \max(m_1,m_2)$ and
$m_1 \wedge m_2 \eqdef \min(m_1,m_2)$.
We equip the set of $m_1 \times m_2$ matrices with real entries
(denoted $\matset{m_1}{m_2}$) with the scalar product
$\langle \mat{X}|\mat{X'} \rangle \eqdef \tr(\mat{X}^\top \mat{X'})$.
 For a given matrix $\mat{X}\in \matset{m_1}{m_2}$ we write
$\|\mat{X}\|_\infty \eqdef \max_{i,j} |\mat{X}_{i,j}|$ and for any $\rho \geq 1$, we denote its Schatten $\rho$-norm
(see \cite{Bhatia97}) by
 \begin{equation*}
  \|\mat{X}\|_{\sigma,\rho}\eqdef \left( \sum_{i=1}^{m_1\wedge m_2} \sigma^\rho_i(\mat{X}) \right)^{1/\rho} \eqs,
 \end{equation*}
with $\sigma_i(\mat{X})$ the singular values of $\mat{X}$ ordered in decreasing order.
The operator norm of $\mat{X}$ is $\|\mat{X}\|_{\sigma,\infty}\eqdef\sigma_1(\mat{X})$.
For any integer $\qClass >0$, we denote by $\tenset{m_1}{m_2}{\qClass}$ the set of $m_1 \times m_2 \times \qClass$
(3-way) tensors. A tensor $\ten{X}$ is of the form $\ten{X}= (\mat{X}^\qclass)_{\qclass=1}^{\qClass}$
where $\mat{X}^\qclass \in \RR^{m_1 \times m_2}$ for any $\qclass \in [\qClass]$.
For any integer $\Class>0$, a function $f:\RR^{\qClass} \to \mathcal{S}_{\Class}$ is called a $\Class$-link
function, where $\mathcal{S}_{\Class}$ is the $\Class-$dimensional probability simplex.
 Given a $\Class$-link function $f$ and
$\ten{X},\ten{X'} \in \tenset{m_1}{m_2}{\qClass}$, we define the squared Hellinger distance

\begin{equation*}
\dhes{f(\ten{X})}{f(\ten{X'})} \eqdef
\frac{1}{m_1m_2} \sum_{k \in [m_1] }  \sum_{k' \in [m_2] }
\sum_{\class \in [\Class]} \left[ \left( \sqrt{f^\class(\ten{X}_{k,k'})}-\sqrt{f^\class(\ten{X}'_{k,k'})} \right)^2
\right] \eqsp ,
\end{equation*}
where $\ten{X}_{k,k'}$ denotes the vector $(\mat{X}^\class_{k,k'})_{\class=1}^{\qClass}$.
The Kullback-Leibler divergence is
\begin{equation*}
\KL{f(\ten{X})}{f(\ten{X'})} \eqdef
\frac{1}{m_1m_2} \sum_{k \in [m_1] }  \sum_{k' \in [m_2] }\sum_{\class \in [\Class]}
\left[f^\class(\ten{X}_{k,k'})\log \left(\frac{f^\class(\ten{X}_{k,k'})}{f^\class(\ten{X}'_{k,k'}) } \right) \right] \eqsp.
\end{equation*}
For any tensor $\ten{X} \in \tenset{m_1}{m_2}{\qClass}$ we define
$\rank(\ten{X})\eqdef\max_{\qclass\in [\qClass]} \rank(\mat{X}^\qclass)$, where $\rank(\mat{X}^\qclass)$ is the
rank of the matrix $\mat{X}^\qclass$ and its
sup-norm by $\|\ten{X}\|_\infty \eqdef \max_{\qclass \in [\qClass]} \|\mat{X}^\qclass \|_\infty$.

\section{Main results}
\label{sec:main-results}
\subsection{One-bit matrix completion}
\label{subsec:bin}
Assume that the observations follow a binomial distribution
 parametrized by a matrix $\mat{\tX} \in \matset{m_1}{m_2}$.
Assume in addition that an $\iid$ sequence of coefficients $(\omega_i)_{i=1}^n \in ([m_1] \times [m_2])^n$
is revealed and denote by $\Pi$ their distribution.
The observations associated to these coefficients are denoted by $(Y_i)_{i=1}^n\in \{1,2\}^n$
and distributed as follows
\begin{equation}
\label{eq:definition-f}
  \PP(Y_i=\class)=f^\class(\mat{\tX}_{\omega_{i}}), \quad \class \in \{1,2\} \eqsp,
\end{equation}
where $f=(f^\class)_{\class=1}^2$ is a $2-$link function.
For ease of notation, we often write $\mat{\tX}_i$  instead of  $\mat{\tX}_{\omega_i}$.
Denote by $\Lik$ the (normalized) negative log-likelihood of the observations:
\begin{equation}
\label{eq:likelihood-binomial1C}
   \Lik(\mat{X}) = -\frac{1}{n} \sum_{i=1}^{n}\left(
   \sum_{\class=1}^{2} \1_{\{Y_i=\class\}}\log\left(f^\class(\mat{X}_i)\right) \right) \eqsp.
\end{equation}

Let $\gamma>0$ be an upper bound of $\| \mat{\tX} \|_{\infty}$. We consider the following estimator
of $\tX$:
\begin{equation}
\label{eq:MinPb1C}
 \hat{\mat{X}}=\argmin_{\substack{\mat{X} \in \RR^{m_1 \times m_2}, \|\mat{X}\|_{\infty}\leq \gamma }}
\Obj(\mat{X}) \eqsp, \quad \text{where} \quad  \Obj(\mat{X})= \Lik(\mat{X}) +
 \lambda  \|\mat{X} \|_{\sigma,1} \eqsp,
\end{equation}
with $\lambda>0$ being a regularization parameter.
Consider the following assumptions.
\begin{assumption}
\label{A0}
The functions $x \mapsto - \ln(f^j(x))$, $j=1,2$ are convex. In addition,
There exist positive constants $H_\gamma$, $L_\gamma$ and $K_\gamma$ such that:
\begin{align}
\label{eq:definition:M-gamma-bin}
H_\gamma \geq & 2 \sup_{|x|\leq\gamma}(|\log(f^1(x))| \vee |\log(f^2(x))|) \eqsp,\\
\label{eq:definition:L-gamma-bin}
L_\gamma \geq &\max\left(\sup_{|x|\leq\gamma} \frac{|(f^1)'(x)|}{f^1(x)},\sup_{|x|\leq\gamma}  \frac{|(f^2)'(x)|}{f^2(x)}\right) \eqsp,\\
 \label{eq:definition:K-gamma-bin}
 K_\gamma =  &\inf_{|x|\leq \gamma}g(x) \eqsp, \quad \text{	where } g(x)= \frac{(f^1)'(x)^2}{8f^1(x)(1-f^1(x))} \eqsp.
 \end{align}
\end{assumption}

 \begin{remark}
 \label{rem:K_gamma}
As shown in \cite[Lemma~2]{Davenport_Plan_VandenBerg_Wootters12}, $K_\gamma$ satisfies
 \begin{equation}
 \label{eq:minoration-utile}
 K_\gamma \leq \inf_{\substack{x,y\in \RR\\
  |x| \leq \gamma \\ |y| \leq \gamma}} \left( \sum_{\class=1}^2 \left(\sqrt{f^\class(x)}-\sqrt{f^\class(y)} \right)^2/(x-y)^2 \right) \eqsp.
 \end{equation}
\end{remark}

Our framework allows a general distribution $\Pi$. We assume that $\Pi$ satisfies the following assumptions introduced in \cite{Klopp14} in the classical setting of unquantized matrix completion:
\begin{assumption}
\label{A1} There exists a constant $\mu > 0$ such that, for any $m_1 > 0$ and $m_2 > 0$
\begin{equation}
\label{eq:definition-mu}
\min_{k \in [m_1], k' \in [m_2]} \pi_{k,k'} \geq \mu / (m_1 m_2) \eqsp, \quad \text{where  } \pi_{k,k'}= \PP( \omega_1= (k,k'))  \eqsp.
\end{equation}
\end{assumption}
Denote by $R_k=\sum_{k'=1}^{m_2} \pi_{k,k'}$ and $C_{k'}=\sum_{k=1}^{m_1} \pi_{k,k'}$  the probability of revealing a coefficient from row $k$ and column $k'$, respectively.
\begin{assumption}
\label{A2}
There exists a constant $\Lc \geq 1$ such that, for all $m_1, m_2$,
\begin{equation*}
 \max_{k,l}(R_k,C_l) \leq \frac{\Lc}{m_1 \wedge m_2} \eqsp,
\end{equation*}
\end{assumption}
The first assumption ensures that every coefficient has a nonzero probability of
being observed, whereas the second assumption requires that no column
nor row is sampled with too high probability
(see also \cite{Foygel_Salakhutdinov_Shamir_Srebro11,Klopp14} for more details on these conditions).
For instance, the uniform distribution yields $\mu=\nu=1$.
Define
\begin{equation}
\label{eq:definition-d-M}
d= m_1+ m_2 \eqsp, \quad M= m_1 \vee m_2, \quad m= m_1 \wedge m_2  \eqsp.
\end{equation}

\begin{theorem}
\label{col:bin}
Assume \Cref{A0}, \Cref{A1}, \Cref{A2} and that  $\| \tX \|_\infty \leq \gamma$. Assume in addition that $n \geq 2 m\log(d)/(9\Lc)$.
Take
 \begin{equation*}
  \lambda = 6L_\gamma\sqrt{ \frac{2 \Lc \log(d)}{m n}} \eqsp.
 \end{equation*}
 Then, with probability at least  $1-3 d^{-1}$
 the Kullback-Leibler divergence
is bounded by
  \begin{equation*}
  \KL{f(\mat{\tX})}{f(\hat{\mat{X}})} \leq \mu \max  \left( \bar{c} \mu \nu\frac{L_\gamma^2 \rank(\bar{\mat{X}})}{K_\gamma} \frac{ M \log(d)}{n} ,
\rme H_\gamma\sqrt{\frac{\log(d)}{n}} \right)\eqsp,
 \end{equation*}
with $\bar{c}$ a universal constant whose value is specified in the proof. 

\end{theorem}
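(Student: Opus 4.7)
The plan is to combine the basic optimality inequality for $\hat{\mat{X}}$ with a high-probability bound on the operator norm of the gradient of the loss at $\mat{\tX}$, a restricted-strong-convexity estimate derived from \Cref{rem:K_gamma}, and a peeling argument; this adapts the Koltchinskii--Lounici--Tsybakov / \cite{Klopp14} strategy to the likelihood setting. Set $\mat{\Delta}=\hat{\mat{X}}-\mat{\tX}$ and $\overline{\Lik}(\mat{X})=\mathbb{E}[\Lik(\mat{X})]$. Decomposing the population loss entry by entry,
\begin{equation*}
\overline{\Lik}(\mat{X})-\overline{\Lik}(\mat{\tX})=\sum_{k,k'}\pi_{k,k'}\sum_{\class=1}^{2}f^{\class}(\mat{\tX}_{k,k'})\log\frac{f^{\class}(\mat{\tX}_{k,k'})}{f^{\class}(\mat{X}_{k,k'})},
\end{equation*}
so that \Cref{A1} yields $\mu\,\KL{f(\mat{\tX})}{f(\hat{\mat{X}})}\le \overline{\Lik}(\hat{\mat{X}})-\overline{\Lik}(\mat{\tX})$, and the task reduces to bounding this right-hand side from above.

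\emph{Step 1 (stochastic gradient bound).} The gradient $\nabla\Lik(\mat{\tX})$ is an average of centred i.i.d.\ matrices: centering follows from $\sum_{\class}(f^{\class})'=0$ because $\sum_{\class}f^{\class}\equiv 1$. Each summand has operator norm at most $L_\gamma$ by \eqref{eq:definition:L-gamma-bin}, and the matrix variance parameter is controlled via \Cref{A2}. The noncommutative Bernstein inequality then gives, with probability at least $1-d^{-1}$,
\begin{equation*}
\|\nabla\Lik(\mat{\tX})\|_{\sigma,\infty}\le c\,L_\gamma\sqrt{\Lc\log(d)/(mn)},
\end{equation*}
so the prescribed $\lambda$ satisfies $\lambda\ge 2\|\nabla\Lik(\mat{\tX})\|_{\sigma,\infty}$ on this event.

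\emph{Step 2 (compatibility and strong convexity).} The optimality of $\hat{\mat{X}}$ together with the standard nuclear-norm subdifferential decomposition relative to the SVD $\mat{\tX}=UDV^{\top}$ yields
\begin{equation*}
\Lik(\hat{\mat{X}})-\Lik(\mat{\tX})\le \lambda\sqrt{2\,\rank(\mat{\tX})}\,\|\mat{\Delta}\|_{\sigma,2}
\end{equation*}
after dropping a nonpositive contribution from the nuclear norm of the component of $\mat{\Delta}$ orthogonal to the row/column span of $\mat{\tX}$. In parallel, the classical inequality $\mathrm{KL}(p,q)\ge 2\,d_{H}^{2}(p,q)$ for probability vectors, combined with the entrywise lower bound of \Cref{rem:K_gamma}, gives the restricted-strong-convexity estimate
\begin{equation*}
\overline{\Lik}(\hat{\mat{X}})-\overline{\Lik}(\mat{\tX})\ge 2K_\gamma\sum_{k,k'}\pi_{k,k'}\mat{\Delta}_{k,k'}^{2}\ge \frac{2\mu K_\gamma}{m_1m_2}\|\mat{\Delta}\|_{\sigma,2}^{2},
\end{equation*}
where the last step invokes \Cref{A1}.

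\emph{Step 3 (combination and peeling).} Write $\overline{\Lik}(\hat{\mat{X}})-\overline{\Lik}(\mat{\tX})=(\Lik(\hat{\mat{X}})-\Lik(\mat{\tX}))+Z$, where $Z$ is the empirical-process fluctuation. Bounding $Z$ by a Hoeffding-type deviation scaled by the envelope $H_\gamma$ from \eqref{eq:definition:M-gamma-bin} and chaining Steps~1--2 produces a quadratic inequality in $\|\mat{\Delta}\|_{\sigma,2}$ whose resolution yields the rate $L_\gamma^{2}\rank(\mat{\tX})M\log(d)/(nK_\gamma)$ with the stated dependence on $\mu,\nu$. A peeling argument over $\|\mat{\Delta}\|_{\sigma,2}$ produces the $\max$ in the statement: when $\|\mat{\Delta}\|_{\sigma,2}$ is large the quadratic bound of Step~2 dominates and gives the first term, while when it is small the $H_\gamma\sqrt{\log(d)/n}$ residual of $Z$ dominates. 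The principal obstacle is precisely this uniform control of $Z$ on a data-dependent neighbourhood of $\mat{\tX}$: it requires a symmetrization/contraction step reducing the supremum to the matrix-Bernstein bound of Step~1, together with a careful calibration of the peeling so that the $\mu$ and $\nu$ factors appear with the correct exponents across the two regimes.
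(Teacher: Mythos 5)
Your proposal is correct and follows essentially the same route as the paper, which derives \Cref{col:bin} as the case $\Class=2$, $\qClass=1$ of \Cref{th1} combined with the matrix-Bernstein calibration of $\lambda$ and of $\EE\|\Sigma_R\|_{\sigma,\infty}$ carried out in the proof of \Cref{th2}: the basic optimality inequality with the nuclear-norm subdifferential decomposition (\Cref{InegBeg}, \Cref{LemProj}), the Hellinger/Kullback--Leibler lower bound in the role of restricted strong convexity, and a peeling argument with symmetrization, contraction and Massart's concentration (\Cref{DevUnifCont}, \Cref{SliceCont}) yielding the two-regime maximum with the $H_\gamma\sqrt{\log(d)/n}$ term. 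The only cosmetic difference is that you resolve a quadratic inequality in $\|\hat{\mat{X}}-\mat{\tX}\|_{\sigma,2}$ and peel over that quantity, whereas the paper works with a quadratic in the square root of the Kullback--Leibler divergence and peels over the expected excess loss.
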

\begin{proof}
 See \Cref{proofth1}.
\end{proof}
This result immediately gives an upper bound on the estimation error of $\hat X$, measured in  Frobenius norm:
\begin{corollary}
\label{col1}
Under the same assumptions and notations of \Cref{col:bin} we have with probability
at least $1-3d^{-1}$
  \begin{equation*}
  \frac{\|\mat{\tX}-\hat{\mat{X}}\|^2_{\sigma,2}}{m_1m_2} \leq \mu \max  \left( \bar{c} \mu \nu\frac{L_\gamma^2 \rank(\bar{\mat{X}})}{K^2_\gamma} \frac{ M \log(d)}{n} ,
\frac{\rme H_\gamma}{K_\gamma} \sqrt{\frac{\log(d)}{n}} \right)\eqsp .
 \end{equation*}
\end{corollary}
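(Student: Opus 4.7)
The plan is to derive the Frobenius-norm bound from the Kullback--Leibler bound of \Cref{col:bin} by a two-step chain of inequalities: the squared Hellinger distance is sandwiched between a multiple of the squared Frobenius distance (from below, via $K_\gamma$) and the Kullback--Leibler divergence (from above, via the standard $\mathrm{H}^2 \leq \mathrm{KL}$ inequality). Concretely, I would first invoke \Cref{rem:K_gamma}: for any $x,y$ with $|x|,|y|\leq \gamma$,
\begin{equation*}
K_\gamma (x-y)^2 \leq \sum_{\class=1}^2 \left(\sqrt{f^\class(x)}-\sqrt{f^\class(y)}\right)^2 \eqsp.
\end{equation*}
Since $\|\tX\|_\infty \leq \gamma$ by assumption and $\|\hat{\mat{X}}\|_\infty \leq \gamma$ by construction of the constrained estimator in \eqref{eq:MinPb1C}, this pointwise inequality can be applied to $x = \mat{\tX}_{k,k'}$ and $y=\hat{\mat{X}}_{k,k'}$, then summed and normalized by $m_1 m_2$ to yield
\begin{equation*}
\frac{K_\gamma\,\|\mat{\tX}-\hat{\mat{X}}\|^2_{\sigma,2}}{m_1 m_2} \leq \dhes{f(\mat{\tX})}{f(\hat{\mat{X}})} \eqsp.
\end{equation*}

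Next I would use the classical relation between the squared Hellinger distance and the Kullback--Leibler divergence, namely $\dhes{p}{q}\leq \KLD{p}{q}$ applied coordinate-wise and summed (since both distances here are just the row/column averages of the pointwise versions). This gives
\begin{equation*}
\dhes{f(\mat{\tX})}{f(\hat{\mat{X}})} \leq \KL{f(\mat{\tX})}{f(\hat{\mat{X}})} \eqsp.
\end{equation*}
Chaining the two inequalities and dividing by $K_\gamma$ produces the clean estimate
\begin{equation*}
\frac{\|\mat{\tX}-\hat{\mat{X}}\|^2_{\sigma,2}}{m_1 m_2} \leq \frac{1}{K_\gamma}\,\KL{f(\mat{\tX})}{f(\hat{\mat{X}})} \eqsp.
\end{equation*}

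Finally I would substitute the KL bound from \Cref{col:bin}, distributing the extra factor $1/K_\gamma$ inside the maximum: the first term picks up an additional $K_\gamma$ in the denominator (turning $L_\gamma^2/K_\gamma$ into $L_\gamma^2/K_\gamma^2$) and the second term becomes $\rme H_\gamma/K_\gamma$, while the event of probability at least $1 - 3 d^{-1}$ is inherited unchanged. No additional bad-event control is needed. The only subtlety is making sure both arguments $\mat{\tX}_{k,k'}$ and $\hat{\mat{X}}_{k,k'}$ lie in $[-\gamma,\gamma]$ so that \Cref{rem:K_gamma} applies; this is precisely why the constraint $\|\mat{X}\|_\infty \leq \gamma$ was imposed on the feasible set in \eqref{eq:MinPb1C}, so there is no real obstacle here beyond invoking the assumption.
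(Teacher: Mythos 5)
Your proof is correct and follows essentially the same route as the paper: the paper's one-line proof invokes \Cref{HelFro}, which is precisely the chain $K_\gamma\|\mat{\tX}-\hat{\mat{X}}\|^2_{\sigma,2}/(m_1m_2) \leq \dhes{f(\mat{\tX})}{f(\hat{\mat{X}})} \leq \KL{f(\mat{\tX})}{f(\hat{\mat{X}})}$ obtained from \Cref{rem:K_gamma} and \Cref{hel:kul} that you spell out, followed by substitution of the bound from \Cref{col:bin}. You have simply unpacked the lemma the paper cites, including the (correct) observation that both $\mat{\tX}$ and $\hat{\mat{X}}$ lie in the $\gamma$-ball so the pointwise inequality applies.
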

\begin{proof}
Using \Cref{HelFro} and \Cref{col:bin}, the result follows.
\end{proof}

\begin{remark}
Note that, up to the factor $L_\gamma^2/K^{2}_\gamma$, the rate of convergence given by   \Cref{col1},  
is the same as in the case of usual unquantized matrix completion, see, for example,
\cite{Klopp14} and \cite{Koltchinskii_Tsybakov_Lounici11}.
 For this usual  matrix completion setting, it has been shown in \cite[Theorem 3]{Koltchinskii_Tsybakov_Lounici11} that
this rate is minimax optimal up to a logarithmic factor.
Let us compare this rate of convergence with those obtained in previous works on 1-bit matrix completion.
 In \cite{Davenport_Plan_VandenBerg_Wootters12}, the parameter
  $\bar X$ is estimated by minimizing the negative log-likelihood
  under the constraints $\|\mat{X}\|_\infty \leq \gamma$ and $\|\mat{X}\|_{\sigma,1} \leq  \gamma
 \sqrt{rm_1m_2}$ for some $r>0$.
  Under the assumption that $\rank(\mat{\tX}) \leq r$, they could prove that
  \begin{equation*}
   \frac{\|\mat{\tX}-\hat{\mat{X}}\|^2_{\sigma,2}}{m_1m_2} \leq C_\gamma \sqrt{\frac{rd}{n}} \eqsp,
  \end{equation*}
 where $C_\gamma$ is a constant depending on $\gamma$ (see \cite[Theorem 1]{Davenport_Plan_VandenBerg_Wootters12}).
 This rate of convergence is slower than the rate of convergence given by \Cref{col1}. \cite{Cai_Zhou14} studied a max-norm
 constrained maximum likelihood estimate and obtain a rate of convergence similar to \cite{Davenport_Plan_VandenBerg_Wootters12}.
 In \cite{Gunasekar_Ravikumar_Gosh14}, matrix completion was considered for
 a likelihood belonging to the exponential family. Note, for instance, that the logit distribution
 belongs to such a family. The following upper bound on the estimation error is provided (see \cite[Theorem 1]{Gunasekar_Ravikumar_Gosh14})
   \begin{align}\label{olga_1}
   \frac{\|\tX-\hat{{X}}\|^2_{\sigma,2}}{m_1m_2}
 &{\leq C_\gamma}\left(\alpha_*^2 \frac{\rank(\tX) M \log(M) }{n}\right).
  \end{align}
 Comparing with \Cref{col1}, \eqref{olga_1} contains an additional term $\alpha_*^2$ where
 $\alpha_*$ is an upper bound of $\sqrt{m_1m_2} \|\tX\|_\infty$.
\end{remark}

\subsection{Minimax lower bounds for one-bit matrix completion}
\label{subsec:lower}
\Cref{col1} insures that our estimator achieves certain Frobenius norm errors.
We now discuss the extent to which this result is optimal. A classical way to address this question is by
determining minimax rates of convergence.

 For any integer $0\leq r\le
\min(m_1,m_2)$ and any $\gamma>0$, we consider the following family of matrices
\begin{equation*} 
 \begin{split}
 \mathcal{ F}(r,\gamma)
 &= \left \{\mat{\tX}\in\,\mathbb R^{m_1\times m_2}:\,
 \mathrm{rank}(\mat{\tX})\leq r,\,\Vert\mat{\tX}\Vert_{\infty}\leq \gamma \right \}.
 \end{split}
 \end{equation*}
We will denote by $\inf_{\mat{\hat{X}}}$ the infimum over all  estimators
$\mat{\hat{X}}$  that are functions of the data $(\omega_i,Y_i)_{i=1}^{n}$.
For any $\mat{X}\in\matset{m_1}{m_2}$, let $\PP_{\mat{X}}$ denote the probability
distribution of the observations $(\omega_i,Y_i)_{i=1}^{n}$
 for a given $2-$link function $f$ and sampling distribution $\Pi$.
We establish a lower bound under an additional assumption on the function $f^1$:
\begin{assumption}
\label{h:minimax}
$(f^1)'$ is decreasing on $\RR_+$ and $K_\gamma=g(\gamma)$ where $g$ and $K_\gamma$
are defined in \eqref{eq:definition:K-gamma-bin}.
\end{assumption}
In particular, \Cref{h:minimax} is satisfied in the case of logit or probit models.
The following theorem establishes a lower bound on the minimax risk in squared Frobenius norm:
%

\begin{theorem}\label{th:lower}
 Assume \Cref{h:minimax}. Let $\alpha \in(0,1/8)$
Then there exists a constant $c>0$ such that,
 for all $m_1,m_2\geq 2$, $1\leq  r\leq m$,
and $\gamma>0$,
\begin{equation*}
\inf_{\mat{\hat{X}}}
\sup_{\substack{\mat{\tX}\in\,{\cal F}( r,\gamma)
}}
\PP_{\mat{\tX}}\left (\dfrac{\Vert \mat{\hat{X}}-\mat{\tX}\Vert_2^{2}}{m_1m_2} > c\min\left \{\gamma^{2}, \dfrac{M r}{n\,K_{0}}\right \} \right )\ \geq\ \delta(\alpha,M) \eqsp,
\end{equation*}
where
\begin{equation}
\label{eq;definition-delta}
\delta(\alpha,M)=\frac{1}{1+2^{-rM/16}}\left(1-2\alpha-\frac{1}{2} \sqrt{\frac{\alpha}{\log(2) (rM)}} \right) \eqsp.
\end{equation}
\end{theorem}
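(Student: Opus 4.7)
The plan is the classical reduction to multiple-hypothesis testing in the spirit of \cite[Chap.~2]{Tsybakov_book}: construct a finite family $\{X_\omega : \omega\in\Omega\}\subset \mathcal{F}(r,\gamma)$ that is well-separated in the normalized Frobenius metric while the associated laws $P_{X_\omega}$ of the sample $(\omega_i,Y_i)_{i=1}^n$ are pairwise close in Kullback--Leibler divergence, then invoke a quantitative Fano-type inequality.

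\emph{Step~1 (Varshamov--Gilbert packing).} Assume without loss of generality $m_2=M$. By the Varshamov--Gilbert lemma applied in an $r\times(M-r)$ off-diagonal block embedded in an $m_1\times m_2$ zero matrix, there is a family $\{B_\omega : \omega\in\Omega\}$ of binary matrices with $\rank(B_\omega)\leq r$, $|\Omega|\geq 2^{rM/16}$, and pairwise Hamming separation at least $rM/16$.

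\emph{Step~2 (scaling to $\mathcal{F}(r,\gamma)$).} Let
\[
\tau=\min\!\left\{\gamma,\; c_{0}\sqrt{m_{1}m_{2}/(nK_{0})}\right\},
\]
for a small absolute constant $c_0>0$ to be fixed in Step~3, and set $X_\omega=\tau B_\omega$. Then $X_\omega\in\mathcal{F}(r,\gamma)$ and the Hamming separation converts into
\[
\frac{\|X_\omega-X_{\omega'}\|_{\sigma,2}^{2}}{m_{1}m_{2}}\;\geq\;\frac{\tau^{2}\,rM}{16\,m_{1}m_{2}}\;\geq\; c\,\min\!\left\{\gamma^{2},\,Mr/(nK_{0})\right\}
\]
for an absolute constant $c>0$, which is the Frobenius separation required by the theorem.

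\emph{Step~3 (KL upper bound).} By tensorization,
\[
\KL{P_{X_\omega}}{P_{X_{\omega'}}}=n\sum_{k,k'}\pi_{k,k'}\,\KL{\mathrm{Ber}(f^{1}(X_{\omega,k,k'}))}{\mathrm{Ber}(f^{1}(X_{\omega',k,k'}))}.
\]
A second-order Taylor expansion of $\log f^{\class}$ around $0$ combined with Assumption~\ref{h:minimax}, which guarantees $|(f^{1})'(x)|\leq|(f^{1})'(0)|$ on $[-\tau,\tau]$ and hence $g(x)\leq g(0)$, yields the pointwise bound $\KL{\mathrm{Ber}(f^{1}(x))}{\mathrm{Ber}(f^{1}(y))}\leq C K_{0}(x-y)^{2}$ uniformly for $x,y\in[-\tau,\tau]$. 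Bounding the sampling weights by $\pi_{k,k'}\leq 1/(m_1m_2)$ (which gives the worst, i.e. largest, KL) and plugging in the value of $\tau$, one obtains
\[
\KL{P_{X_\omega}}{P_{X_{\omega'}}}\;\leq\; C' n K_{0}\tau^{2}\,\frac{rM}{m_{1}m_{2}}\;\leq\;\alpha\log|\Omega|
\]
for the desired $\alpha\in(0,1/8)$, provided $c_{0}$ is chosen small enough.

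\emph{Step~4 (quantitative Fano).} Applying a Birgé-type Fano inequality (see, e.g., \cite[Theorem~2.5 and Corollary~2.6]{Tsybakov_book}) to the family $\{P_{X_\omega}\}_{\omega\in\Omega}$ with pseudo-metric $\|\cdot\|_{\sigma,2}^{2}/(m_1m_2)$ produces exactly the announced probability bound: the factor $1/(1+2^{-rM/16})$ reflects $|\Omega|$ via Birgé's lemma, while the correction $\tfrac12\sqrt{\alpha/(\log 2\cdot rM)}$ is the standard quantitative slack of that inequality.

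\emph{Main obstacle.} I expect Step~3 to be the delicate point. The natural pointwise KL bound for two Bernoulli laws with parameters $f^{1}(x)$ and $f^{1}(y)$ involves $g$ evaluated at a mean-value point between $x$ and $y$, not at $0$, and only the monotonicity built into Assumption~\ref{h:minimax} legitimizes the replacement of that intermediate value by $0$ and the resulting sharp constant $K_{0}$ in the bound; without this assumption one would only recover $K_\gamma$ on the right-hand side, which would produce a looser lower bound and break the matching with Corollary~\ref{col1}.
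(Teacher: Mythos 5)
Your overall architecture (Varshamov--Gilbert packing, Kullback--Leibler control via the Bernoulli bound plus the mean value theorem, \Cref{h:minimax} to freeze the derivative at a favorable point, then Tsybakov's Theorem 2.5) is the same as the paper's, and your Step~3 diagnosis of where \Cref{h:minimax} enters is accurate. But Step~2 contains a genuine gap: because your hypotheses differ only on an $r\times(M-r)$ block, the squared Frobenius separation is at most $\tau^{2}rM$, so after normalizing by $m_1m_2=mM$ you only get
\[
\frac{\|X_\omega-X_{\omega'}\|_{\sigma,2}^{2}}{m_1m_2}\;\asymp\;\tau^{2}\,\frac{rM}{m_1m_2}\;=\;\tau^{2}\,\frac{r}{m}\eqsp,
\]
which falls short of the claimed $c\min\{\gamma^{2},Mr/(nK_{0})\}$ by a factor $r/m$ whenever $\tau=\gamma$ is the active branch (take $r=1$, $m$ large, $\gamma$ small: your separation is $\gamma^{2}/(16m)\ll c\gamma^{2}$). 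The second inequality displayed in your Step~2 is therefore false as stated. The paper avoids this by tiling: it builds an $m_1\times r$ sign pattern $L$ and replicates it $\lfloor m_2/r\rfloor$ times horizontally, so that the hypotheses differ on order $m_1m_2$ entries, the normalized separation becomes $\kappa^{2}\gamma^{2}/16$ with no loss of $r/m$, and the rank is still at most $r$. You need the same replication (or an equivalent device) for the stated rate to come out.

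A second, related problem is your claim $\pi_{k,k'}\leq 1/(m_1m_2)$: the theorem places no assumption on $\Pi$, so this is unjustified, and an adversarial $\Pi$ can concentrate all its mass on your $r\times(M-r)$ block. The paper instead bounds the per-entry KL contribution uniformly by $8(\kappa\gamma)^{2}g((1-\kappa)\gamma)$ and uses only $\sum_{k,k'}\pi_{k,k'}=1$, which is valid for every $\Pi$; but once you do this correctly the admissible scale is $\tau^{2}\asymp\alpha rM/(nK_{0})$ rather than $\alpha m_1m_2/(nK_{0})$, and your untiled construction again loses the factor $r/m$ in the final bound, confirming that the construction itself must change. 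Two minor points: centering your packing at $0$ rather than near $\gamma$ is legitimate and in fact delivers $K_{0}$ directly (the paper obtains $K_{(1-\kappa^{*})\gamma}$ and relies on \Cref{h:minimax} to compare it with $K_{0}$); and the paper handles the circular dependence between the perturbation scale $\kappa\gamma$ and the constant $K_{(1-\kappa)\gamma}$ through an explicit fixed-point argument, a bookkeeping detail your definition of $\tau$ elides but which is easy to restore.
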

\begin{proof}
 See \Cref{proof:thlower}.
\end{proof}
Note that the lower bound given by \Cref{th:lower} is proportional to the rank
multiplied by the maximum dimension of $\tX$ and inversely proportional the sample size $n$.
Therefore the lower bound matches the upper bound given by \Cref{col1} up to a constant and a logarithmic
factor.
The lower bound does not capture the dependance on $\gamma$, note however that the upper and lower bound only
differ by a factor $ L^{2}_\gamma\,/K_{\gamma}$.

\subsection{Extension to multi-class problems}
\label{subsec:gen}
Let us now consider a more general setting where the observations follow a distribution
over a finite set $\{1,\dots,\Class\}$,
 parameterized by a tensor $\ten{\tX} \in \tenset{m_1}{m_2}{\qClass}$.
The distribution of the observations $(Y_i)_{i=1}^n\in [\Class]^n$ is
\begin{equation*}
  \PP(Y_i=\class)=f^\class(\ten{\tX}_{\omega_{i}}), \quad \class \in [\Class] \eqsp,
\end{equation*}
where $f=(f^\class)_{\class=1}^\Class$ is now a $\Class$-link function
and $\ten{\tX}_{\omega_{i}}$ denotes the vector $(\mat{\tX}_{\omega_i}^\qclass)_{\qclass=1}^{\qClass}$.
The negative log-likelihood of the observations is now given by:
\begin{equation}
\label{eq:likelihood-binomial}
   \Lik(\ten{X}) = -\frac{1}{n} \sum_{i=1}^{n}\left(
   \sum_{\class=1}^{\Class} \1_{\{Y_i=\class\}}\log\left(f^\class(\ten{X}_i)\right) \right) \eqsp.
\end{equation}
where we use the notation $\ten{X}_i=\ten{X}_{\omega_i}$.
Our proposed the estimator is defined as:
\begin{equation}
\label{eq:MinPb}
 \hat{\ten{X}}=\argmin_{\substack{\ten{X} \in \tenset{m_1}{m_2}{\qClass} \\ \|\ten{X}\|_{\infty}\leq \gamma }} \Obj(\ten{X}) \eqsp, \quad \text{where} \quad  \Obj(\ten{X})= \Lik(\ten{X}) +
 \lambda \sum \limits_{\class=1}^{\qClass} \|\mat{X}^\class \|_{\sigma,1} \eqsp,
\end{equation}

In order to extend the results of the previous sections we make an additional assumption
which allows to split the log-likelihood as a sum.

\begin{assumption}
\label{A3}
There exist functions $(g_\qclass^\class)_{(\qclass,\class)\in [\Class]\times[\qClass]}$
such that the $\Class$-link function $f$ can be factorized as follows
\begin{equation*}
 f^\class(x_1,\dots,x_\qClass)=\prod_{\qclass=1}^{\qClass} g^\class_\qclass(x_\qclass) \quad \text{for } \class \in [\Class] \eqsp.
\end{equation*}
\end{assumption}
The model considered above covers many finite distributions
including among others logistic binomial (see \Cref{subsec:bin}) and
conditional logistic multinomial (see \Cref{sec:implementation}).

Assumptions on constants depending on the link function are extended by
\begin{assumption}
 \label{A0m}
There exist positive constant $H_\gamma$, $L_\gamma$ and $K_\gamma$ such that:
\begin{align}
\label{eq:definition:M-gamma}
H_\gamma \geq &\max_{(\class,\qclass) \in [\Class]\times [\qClass]} \sup_{|x|\leq\gamma}2 |\log(g^\class_\qclass(x))| \eqsp,\\
\label{eq:definition:L-gamma}
\quad L_\gamma \geq &\max_{(\class,\qclass) \in [\Class]\times [\qClass]}\sup_{|x|\leq\gamma} \left| \frac{(g_\qclass^j)'(x)}{g_\qclass^j(x)} \right|\eqsp,\\
\label{eq:definition:K-gamma}
\quad K_\gamma \leq &\inf_{\substack{x,y\in \RR^\qClass \\
 \|x\|_\infty \leq \gamma \\ \|y\|_\infty \leq \gamma}} \left( \sum_{\class=1}^\Class \left(\sqrt{f^\class(x)}-\sqrt{f^\class(y)} \right)^2/\|x-y\|_2^2 \right) \eqsp.
\end{align}
\end{assumption}
For any tensor $\ten{X} \in \tenset{m_1}{m_2}{\qClass}$, we write
$\bar{\Sigma}\eqdef \nabla \Lik(\ten{\tX}) \in \tenset{m_1}{m_2}{\qClass}$.
We also define the sequence of matrices $(E_i)_{i=1}^{n}$ associated to the revealed coefficients $(\omega_i)_{i=1}^{n}$ by
$E_i\eqdef~e_{k_i} (e'_{l_i})^\top$ where $(k_i,l_i)=\omega_i$ and with
$(e_k)_{k=1}^{m_1}$ (\resp\ ($(e'_l)_{l=1}^{ m_2}$) being the canonical basis of
$\RR^{m_1}$ (\resp\ $\RR^{m_2}$).
Furthermore, if $(\varepsilon_i)_{1\leq i \leq n}$ is a Rademacher sequence independent
 from  $(\omega_i)_{i=1}^{n}$ and $(Y_i)_{1\leq i \leq n}$ we define the matrix $\Sigma_R$ as follow
\begin{equation*}
 \Sigma_R\eqdef\frac{1}{n}\sum_{i=1}^{n} \varepsilon_i E_i \eqsp.
\end{equation*}

We can now state the main results of this paper.
\begin{theorem}
\label{th1}
 Assume  \Cref{A1}, \Cref{A3} and \Cref{A0m} hold, $\lambda>2\max_{\qclass \in [\qClass]} \|\bar{\Sigma}^\qclass \|_{\sigma,\infty}$
 and \\
 $\norm{\ten{\tX}}_\infty \leq \gamma$.
 Then, with probability at least $1-2d^{-1}$,
 the Kullback-Leibler divergence
is bounded by
 \begin{multline*}
  \KL{f(\ten{\tX})}{f(\ten{\hat X})} \leq \\
  \mu \max \left(4 \mu \frac{m_1m_2 \mrk{\ten{\tX}} }{K_\gamma}
  \left( \lambda^2 +  256 \rme (\qClass L_\gamma \PE \norm{\Sigma_R}_{\sigma,\infty})^2\right)
  ,\rme H_\gamma\sqrt{\frac{\log(d)}{n}} \right)\eqsp.
 \end{multline*}
with $d$ defined in \eqref{eq:definition-d-M}.

\end{theorem}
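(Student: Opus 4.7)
My plan is to follow the standard template for nuclear-norm penalized M-estimators (Koltchinskii-Lounici-Tsybakov, Negahban-Wainwright, Klopp) and to closely parallel the proof of the one-bit \Cref{col:bin}, using the factorization in \Cref{A3} to decouple the tensor into its $\qClass$ slices. The first step is to extract a basic inequality: optimality of $\hat{\ten{X}}$ in \eqref{eq:MinPb} gives $\Lik(\hat{\ten{X}}) - \Lik(\ten{\tX}) \leq \lambda \sum_\qclass \bigl(\|\mat{\tX}^\qclass\|_{\sigma,1} - \|\hat{\mat{X}}^\qclass\|_{\sigma,1}\bigr)$. Subtracting $\langle \bar{\Sigma}, \hat{\ten{X}} - \ten{\tX}\rangle$ from both sides and writing $\mat{\Delta}^\qclass \eqdef \hat{\mat{X}}^\qclass - \mat{\tX}^\qclass$, matrix Hölder duality yields $|\langle \bar{\Sigma}^\qclass, \mat{\Delta}^\qclass\rangle| \leq \|\bar{\Sigma}^\qclass\|_{\sigma,\infty}\|\mat{\Delta}^\qclass\|_{\sigma,1}$. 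Combined with the hypothesis $\lambda > 2\max_\qclass \|\bar{\Sigma}^\qclass\|_{\sigma,\infty}$ and a slice-by-slice tangent-space decomposition à la Recht-Fazel-Parrilo, this forces each $\mat{\Delta}^\qclass$ into a restricted cone in which $\sum_\qclass \|\mat{\Delta}^\qclass\|_{\sigma,1} \lesssim \sqrt{\qClass\,\rank(\ten{\tX})}\,\bigl(\sum_\qclass \|\mat{\Delta}^\qclass\|_{\sigma,2}^2\bigr)^{1/2}$.

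Second, I would lower-bound the divergence. The definition of $K_\gamma$ in \eqref{eq:definition:K-gamma} implies the slice-wise bound $\dhes{f(\ten{\tX})}{f(\ten{X})} \geq (K_\gamma/(m_1 m_2)) \sum_\qclass \|\mat{X}^\qclass - \mat{\tX}^\qclass\|_{\sigma,2}^2$, paralleling \cite[Lemma 2]{Davenport_Plan_VandenBerg_Wootters12}; combined with the standard relation $\KL{\cdot}{\cdot} \geq 2\,d_H^2(\cdot,\cdot)$ this delivers a quadratic lower bound on the KL divergence in terms of the Frobenius norms of the slices. \Cref{A1} then converts this unweighted Frobenius norm into the $\Pi$-weighted quantity that naturally emerges from taking expectations, at the cost of the factor $\mu$ visible in the statement.

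The main obstacle is the passage from the empirical Bregman divergence produced by Step 1 to the population divergence lower-bounded in Step 2. I would handle it with a peeling argument on geometric shells of $(\sum_\qclass\|\mat{\Delta}^\qclass\|_{\sigma,2}^2)^{1/2}$ intersected with the restricted cone: on each shell, symmetrize and apply the contraction principle to strip the Lipschitz factor $L_\gamma$ inherent in each $-\log g^\class_\qclass$, reducing the uniform deviation of the empirical process to a Rademacher complexity bounded by $\qClass\,L_\gamma\,\PE\|\Sigma_R\|_{\sigma,\infty}\sum_\qclass \|\mat{\Delta}^\qclass\|_{\sigma,1}$. The cone inequality from Step 1 then transforms this into a term linear in $(\sum_\qclass\|\mat{\Delta}^\qclass\|_{\sigma,2}^2)^{1/2}$; solving the resulting quadratic against the $K_\gamma$ lower bound produces the first branch of the $\max$, namely the $\mu^2 m_1 m_2 \rank(\ten{\tX})(\lambda^2 + (\qClass L_\gamma \PE\|\Sigma_R\|_{\sigma,\infty})^2)/K_\gamma$ term. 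The second branch $\rme H_\gamma\sqrt{\log(d)/n}$ arises from the coarsest peeling shell, where the error is too small to feel the low-rank geometry; a direct Hoeffding-type concentration of $\Lik$ around its mean using the uniform log-likelihood bound $H_\gamma$ from \eqref{eq:definition:M-gamma} suffices there. A union bound over the peeling tail event and the Hoeffding event yields the advertised $1 - 2d^{-1}$ probability.
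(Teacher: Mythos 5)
Your proposal follows essentially the same route as the paper's proof: the basic inequality from optimality of $\hat{\ten{X}}$, the restricted cone obtained from $\lambda>2\max_\qclass\|\bar{\Sigma}^\qclass\|_{\sigma,\infty}$ via a tangent-space decomposition, the $K_\gamma$-based lower bound linking KL/Hellinger to the slice-wise Frobenius norms, and a peeling--symmetrization--contraction argument whose Rademacher complexity is controlled by $\qClass L_\gamma\,\PE\|\Sigma_R\|_{\sigma,\infty}$, ending with a quadratic inequality and the $H_\gamma\sqrt{\log(d)/n}$ floor. The only (immaterial) differences are that the paper peels on the population divergence $\D$ rather than on Frobenius shells, uses Massart's concentration rather than Hoeffding, and obtains the second branch of the $\max$ deterministically from the case $\D\leq\beta$ rather than from a separate concentration event.
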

\begin{proof}
 See \Cref{proofth1}.
\end{proof}

Note that the lower bound of $\lambda$ is stochastic and the expectation $\EE \|\Sigma_R \|_{\sigma,\infty}$
is unknown. However, these quantities can be controlled using \Cref{A2}.

\begin{theorem}
\label{th2}
Assume \Cref{A1}, \Cref{A2}, \Cref{A3} and \Cref{A0m} hold
and that $\| \ten{\tX} \|_\infty \leq \gamma$. Assume in addition that $n \geq 2 m\log(d)/(9\Lc)$.
Take
 \begin{equation*}
  \lambda = 6 L_\gamma\sqrt{ \frac{2 \Lc \log(d)}{mn}} \eqsp.
 \end{equation*}
 Then, with probability at least  $1-(2+\qClass)d^{-1}$,
 the Kullback-Leibler divergence
is bounded by
 \begin{multline*}
\KL{f(\ten{\tX})}{f(\ten{\hat X})} \leq
\mu \max  \left( \bar{c} \mu \nu\frac{\qClass^2L_\gamma^2 \mrk{\ten{\tX}} }{K_\gamma}
\frac{ M\log(d)}{n} ,
\rme H_\gamma\sqrt{\frac{\log(d)}{n}} \right)\eqsp,
 \end{multline*}
with $\bar{c}$ a universal constant
, $d$, $m$ and $M$ defined in \eqref{eq:definition-d-M}. 
\end{theorem}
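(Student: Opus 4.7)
The plan is to derive Theorem~2 as a concentration-based instantiation of Theorem~1. Theorem~1 already provides a deterministic-type bound on $\KL{f(\ten{\tX})}{f(\ten{\hat X})}$ conditional on (i) the regularization parameter $\lambda$ exceeding $2\max_\qclass \|\bar{\Sigma}^\qclass\|_{\sigma,\infty}$, and (ii) control of $\EE \|\Sigma_R\|_{\sigma,\infty}$. So the entire task is to show, under Assumptions~\ref{A1}, \ref{A2}, \ref{A3}, \ref{A0m} and the sample-size lower bound $n\geq 2m\log(d)/(9\Lc)$, that the proposed choice $\lambda = 6L_\gamma\sqrt{2\Lc\log(d)/(mn)}$ satisfies (i) with high probability and that the quantity in (ii) is itself of order $\sqrt{\Lc\log(d)/(mn)}$.

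First, I would examine $\bar\Sigma^\qclass = \partial_{\qclass}\Lik(\ten{\tX})$. Using the factorization of $f$ from Assumption~\ref{A3}, this derivative is a sum over $i\in[n]$ of terms of the form $\frac{1}{n}\alpha_i^\qclass E_i$, where $E_i = e_{k_i}(e'_{l_i})^\top$ and, conditionally on $\omega_i$, the scalar $\alpha_i^\qclass$ has mean zero and modulus at most $L_\gamma$ by \eqref{eq:definition:L-gamma}. This expresses $\bar\Sigma^\qclass$ as a sum of independent centered bounded random matrices, to which the Bernstein inequality for rectangular matrices (Tropp) applies. The variance parameter is controlled via Assumption~\ref{A2}: $\|\EE[\alpha_i^2 E_i E_i^\top]\|_{\sigma,\infty}\leq L_\gamma^2 \max_k R_k \leq L_\gamma^2 \Lc/m$, and symmetrically for $E_i^\top E_i$. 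Matrix Bernstein then gives
\[
\PP\!\left(\|\bar\Sigma^\qclass\|_{\sigma,\infty} \geq C L_\gamma\sqrt{\tfrac{\Lc \log d}{mn}}\right)\leq d^{-1},
\]
where the assumption $n\geq 2m\log(d)/(9\Lc)$ ensures the sub-Gaussian term dominates the Bernstein linear term. A union bound over $\qclass\in[\qClass]$ yields $\lambda > 2\max_\qclass \|\bar\Sigma^\qclass\|_{\sigma,\infty}$ on an event of probability at least $1-\qClass\, d^{-1}$.

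Second, I would bound $\EE\|\Sigma_R\|_{\sigma,\infty}$ where $\Sigma_R = n^{-1}\sum_{i=1}^n \varepsilon_i E_i$. The Rademacher signs make the centered matrix Bernstein argument identical, but now $|\varepsilon_i|=1$ directly, so the same sampling variance bound $\Lc/(mn)$ governs the result. Taking expectations in the tail bound (or directly invoking a non-commutative Khintchine / Rudelson-type inequality, as in Lemma 6 of \cite{Klopp14} adapted to Assumption~\ref{A2}) gives
\[
\EE \|\Sigma_R\|_{\sigma,\infty} \leq C'\sqrt{\tfrac{\Lc \log d}{mn}}
\]
under the same sample-size condition.

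Third, I would plug these two estimates into Theorem~1. The first term inside the $\max$ in Theorem~1 is proportional to $m_1 m_2 \mrk(\ten{\tX})\, (\lambda^2 + (\qClass L_\gamma \EE\|\Sigma_R\|_{\sigma,\infty})^2)/K_\gamma$. Both $\lambda^2$ and $(\qClass L_\gamma \EE\|\Sigma_R\|_{\sigma,\infty})^2$ are of order $\qClass^2 L_\gamma^2 \Lc \log(d)/(mn)$ (the prefactor $\qClass^2$ coming from the second term), so after multiplying by $m_1 m_2/m = M$ one obtains the claimed rate $\bar c \,\mu\Lc\, \qClass^2 L_\gamma^2 \mrk(\ten{\tX})\, M\log(d)/(nK_\gamma)$. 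The second term $\rme H_\gamma\sqrt{\log(d)/n}$ inside the $\max$ passes through unchanged. The final probability $1-(2+\qClass)d^{-1}$ aggregates the $1-2d^{-1}$ event of Theorem~1 with the $\qClass d^{-1}$ event used to control $\lambda$.

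The only genuinely delicate step is the matrix-Bernstein bound on $\|\bar\Sigma^\qclass\|_{\sigma,\infty}$: one must track how Assumption~\ref{A2} enters the row/column variance parameter and verify that the threshold $n\geq 2m\log(d)/(9\Lc)$ is exactly what makes the sub-Gaussian regime kick in so that the bound is $\sqrt{\Lc\log(d)/(mn)}$ rather than $\Lc\log(d)/(mn)$. The rest of the argument is a bookkeeping substitution into the already-available Theorem~1.
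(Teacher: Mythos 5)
Your proposal follows essentially the same route as the paper: reduce to \Cref{th1}, recognize $\bar\Sigma^\qclass$ as a sum of centered score-weighted matrices $n^{-1}\alpha_i^\qclass E_i$ with $|\alpha_i^\qclass|\le L_\gamma$ and row/column variance $L_\gamma^2\Lc/m$ via \Cref{A2}, apply matrix Bernstein plus a union bound over $\qclass$, bound $\EE\|\Sigma_R\|_{\sigma,\infty}$ by integrating the analogous tail bound (the paper's \Cref{lem:MatExp}), and substitute into \Cref{th1}. The argument and the role of the sample-size condition are identified correctly, so this matches the paper's proof.
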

\begin{proof}
 See \Cref{proofth2}.
\end{proof}

\section{Implementation}
\label{sec:implementation}

In this section an implementation for the following $\Class$-class link function is given:
\begin{align*}
f^\class(x^1,\dots,x^{\Class-1})=
\begin{cases}
\exp({x}^\class) \left(\displaystyle\prod_{\qclass=1}^{\class}(1+\exp({x^\qclass}))\right)^{-1} & \text{if }  \class \in [\Class-1] \eqs,\\
\left(\displaystyle\prod_{\qclass=1}^{\Class-1}(1+\exp({x^\qclass}))\right)^{-1} & \text{if }  \class=\Class \eqs.
\end{cases}
\end{align*}
This $\Class$-class link function boils down to parameterizing the distribution of the observation as follows:
\begin{align*}
&\PP(Y_i=1)=\frac{\exp(\mat{\tX_i}^1)}{1+\exp(\mat{\tX_i^1})} \eqsp, \\
&\PP(Y_i=\class|Y_i>\class-1)=\frac{\exp(\mat{\tX_i}^\class)}{1+\exp(\mat{\tX_i^\class})} \quad \text{for } \class \in \{2,\dots,\Class-1\} \eqs.
\end{align*}
Assumption \Cref{A3} is satisfied and the problem \eqref{eq:MinPb} is separable \wrt each matrix $\mat{X^\qclass}$.
Following \cite{Davenport_Plan_VandenBerg_Wootters12}, we solve \eqref{eq:MinPb} without taking into account the constraint $\gamma$; as reported in \cite{Davenport_Plan_VandenBerg_Wootters12} and confirmed by our experiments, the impact of this projection is negligible, whereas it increases
significantly the computation burden.

Because the problem is separable, it suffices to solve in parallel each sub-problem
\begin{equation}
\label{eq:MinSubPb}
 \hat{\mat{X}^\qclass}=\argmin_{\substack{\mat{X} \in \matset{m_1}{m_2}}}
 \Objo^{\qclass}_\lambda(\mat{X}) \eqsp, \quad \text{where} \quad  \Objo^{\qclass}_\lambda(\mat{X})= \Liko^\qclass(\mat{X}) +
 \lambda \|\mat{X} \|_{\sigma,1} \eqsp.
\end{equation}
This can be achieved by using the coordinate gradient
descent algorithm introduced by \cite{Dudik_Harchaoui_Malick12}.
To describe the algorithm, consider first the set of normalized rank one matrices
\begin{equation*}
 \tens:=\left\{ M \in \RR ^{m_1 \times m_2} |
 M=  uv^\top ~|~ \|u \|_2=\|v \|_2=1, \ \right\} \,.
\end{equation*}
Define  $\Theta$ the linear space of real-valued functions on $\tens$ with finite support, 
\ie\ any $\theta \in \Theta$ satisfies $\theta(M)= 0$ except for a finite number of $M \in \tens$.
This space is equipped with the $\ell^1$-norm  $\| \theta \|_1= \sum_{M \in \tens} |\theta(M)|$.
Define by $\Theta_+$ the positive orthant, \ie\ the cone of functions $\theta \in \Theta$ such that $\theta(M) \geq 0$ for all $M \in \tens$.
Any matrix $\mat{X}\in\matset{m_1}{m_2}$ can be associated to an element $\theta\in \Theta_+$ satisfying
\begin{equation}
\label{eq:representation-canonique}
 \mat{X}= \sum_{M\in \tens} \theta(M) M\eqs .
\end{equation}
Such function is not unique. Consider
an SVD of $\mat{X}$ \ie $\mat{X}=\sum_{i=1}^{m} \lambda_i u_i v_i^\top$, 
where $(\lambda_i)_{i=1}^m$ are the singular values and $(u_i)_{i=1}^m$, $(v_i)_{i=1}^m$ are left and right singular vectors,
then $\theta_{\mat{X}} =\sum_{i=1}^{m} \lambda_i \delta_{u_i v_i^\top}$ satisfies \eqref{eq:representation-canonique},
with 
$\delta_M \in \Theta$ is the function on $\tens$ satisfying $\delta_M(M)=1$ and $\delta_M(M')=0$ if $M' \neq M$.
As seen below, the function $\theta_{\mat{X}}$ plays a key role.

Conversely, for any  $\theta \in \Theta_+$,  define
\begin{equation*}
 \wrec: \theta \to \wrec_\theta :=\sum_{M \in \tens} \theta(M) M \eqs .
\end{equation*}
and the auxiliary objective function:
\begin{equation}
\label{eq:definition-Objb}
\Objob^\qclass: \theta \to \Objob^\qclass(\theta)\eqdef \lambda  \sum_{M \in \tens} \theta(M) + \Liko^\qclass(\wrec_\theta) \eqs.
\end{equation}
The triangular inequality implies that for all $\theta \in \Theta_+$,
\[
 \|\mat{\wrec_\theta}\|_{\sigma,1} \leq \| \theta \|_1 \eqs.
\]
For $\theta \in \Theta$ we denote by $\supp(\theta)$ the support of $\theta$
\ie the subset of $\tens$ such that $\theta(M)\neq 0 \iff \mat{M} \in \supp(\theta)$.
If for any $\mat{M},\mat{M}' \in \supp(\theta)$, $\mat{M}\neq \mat{M}'$, $\pscal{\mat{M}}{\mat{M}'}=1$
, then
$\|\theta\|_1= \| \mat{\wrec_\theta} \|_{\sigma,1}$. Indeed in such case $\sum_{M \in \tens} \theta(M) M$
defines a SVD of $\mat{\wrec_\theta}$.
Therefore the minimization of \eqref{eq:definition-Objb} is actually equivalent to the minimization of
\eqref{eq:MinSubPb}; see \cite[Theorem~3.2]{Dudik_Harchaoui_Malick12}. The minimization
 \eqref{eq:definition-Objb} can be implemented using a
coordinate gradient descent algorithm which updates at each iteration the nonnegative finite support function $\theta$.

\begin{algorithm}[h]
\label{alg:MCLGD}
\textbf{Initialization:} initial parameter $\theta_0$, precision $\epsilon$ \\
\textbf{Loop:}\\

Compute the top singular vector pair of $-\nabla \Liko^\qclass(\mat{W_{\theta_k}})$: $u_k$, $v_k$\\

$g_k \leftarrow \lambda + \pscal{\nabla\Liko^\qclass(\mat{W_{\theta_k}})}{u_k v_k ^\top}$\\
\If{$g_k \leq -\epsilon/2$}{
  $\beta_k \leftarrow \displaystyle \argmin_{b \in \RR_+}
   \Objob^\qclass\left(\theta+ b \delta_{u_k v_k^\top})\right)$\\
  $\theta_{k+1} \leftarrow \theta_k+ \beta_k \delta_{u_k v_k^\top}$, \\
}
\Else{
  $g_k^{\rm max} \leftarrow  \displaystyle \max_{M \in \supp(\theta_k)}
  |\lambda + \pscal{\nabla\Liko^\qclass(W_{\theta_k})}{M}|$ \\
  \If{$g_k^{\rm max} \leq \epsilon$}{
  \, Break
  }\Else{
  $\theta_{k+1} \leftarrow \displaystyle \argmin_{\theta' \in \Theta_+,
  \supp(\theta') \subset \supp(\theta_k)} \Objob^\qclass(\theta')$\\
  }
}

\caption{Lifted coordinate gradient descent}
\end{algorithm}

The algorithm is summarized in Algorithm~\ref{alg:MCLGD}.
Compared to the Soft-Impute \cite{Mazumder_Hastie_Tibshirani10}
or the SVT \cite{Cai_Candes_Shen10} algorithms, this algorithm does not require the computation of a full SVD at each step
of the main loop of an iterative (proximal) algorithm (recall that the proximal operator
associated to the nuclear norm is the soft-thresholding operator of the singular values).
The proposed algorithm requires only to compute the largest singular values and associated singular vectors.

Another interest of this algorithm is that it only requires to evaluate the coordinate of the gradient for the entries which have been actually observed. It is therefore memory efficient when the number of observations is smaller than the total number of coefficients $m_1 m_2$, which is the typical setting in which matrix completion is used.
Moreover, we use Arnoldi
iterations to compute the top singular values and vector
pairs (see \cite[Section 10.5]{Golub_VanLoan13} for instance) which
allows us to take full advantage of sparse structures, the minimizations
in the inner loop are carried out using the L-BFGS-B algorithm.
\Cref{tab:exec} provides the execution time one-bit matrix completion (on a 3.07Ghz w3550 Xeon CPU with RAM~1.66 Go, Cache~8 Mo, C implementation).

\begin{table}[t!]
\centering
\begin{tabular}{|l|c|c|c|}
   \hline
   \textbf{Parameter Size} & $1000\times1000$ &$3000\times3000$ &$10000\times10000$   \\
   \hline
   \textbf{Observations} & $100\cdot 10^3$ &$1\cdot 10^6$ & $10\cdot 10^6$   \\
   \hline
   \textbf{Execution Time (s.)}& $4.5$ & $52$ & $730$ \\
   \hline
\end{tabular}
\caption{Execution time of the proposed algorithm for the binary case.
}
\label{tab:exec}
\end{table}

\section{Numerical Experiments}

\label{sec:numerical-experiment}
\label{subsec:condition_multi}

We have performed numerical experiments on both simulated and real data provided by the  MovieLens project (\url{http://grouplens.org}).
Both the one-bit matrix completion - $\Class=2$, $\qClass=1$ - and the extended multi-class setting -$\Class=5$, $\qClass=4$ -
are considered; comparisons are also provided with the classical Gaussian matrix completion algorithm to assess the potential
gain achieved by explicitly taking into account the facts that the observations belong to a finite alphabet.
Only a limited part of the experiments are reported in this article; a more extensive assessment can be obtained upon authors request.

For each matrix $\mat{\tX}^\qclass$ we sampled uniformly five unitary (for the Euclidean norm)
vector pairs $(u_k^\qclass, v_k^\qclass)_{k=1}^{5}$. The matrix $\mat{\tX}^\qclass$ is then defined
as
\begin{equation*}
\mat{\tX}^\qclass=\Gamma\sqrt{m_1m_2} \sum \limits_{k=1}^{5}  \alpha_k u_k^\qclass( v_k^\qclass)^\top + \eta ^\qclass\mathbf{I}_{m_1\times m_2},
\end{equation*}
with $(\alpha_1,\dots,\alpha_5)=(2,1,0.5,0.25,0.1)$,
$\Gamma$ a scaling factor and $\mathbf{I}_{m_1\times m_2}$ the $m_1~\times~m_2$ matrix of ones.
The term $\eta_l$ has been fixed so that each class has the same average probability
\ie $f^\class((\EE[\mat{\tX}^\qclass])_{\qclass=1}^{\Class-1})=1/\Class$ for $\class \in [\Class]$.
Note that the factor $\sqrt{m_1m_2}$
implies that the variance of $\mat{\tX}^\qclass$ coefficients does not depend
on $m_1$ and $m_2$.
The sizes investigated are $(m_1,m_2) \in \{(500, 300),(1000,600)\}$.

The observations are sampled to the conditional multinomial logistic model introduced in  \Cref{sec:implementation}.
For comparison purposes we have also computed $\hat{\mat{X}}^\mathcal{N}$,
the classical Gaussian version (\ie using a squared Frobenius norm
in \eqref{eq:MinPb}).
Contrary to the logit version, the Gaussian matrix completion does not directly recover the distribution of
the observations $(Y_i)_{i=1}^n$.
However, we can estimate  ${\PP}(Y_i=\class)$ by the following quantity:
\begin{equation*}
F_{\mathcal{N}(0,1)}(p_{\class+1})-F_{\mathcal{N}(0,1)}(p_\class)
 \text{  with  } p_\class =
 \begin{cases}
 0 &\text{ if } \class =1\eqs,\\
\frac{\class-0.5-\hat{\mat{X}}^\mathcal{N}_{i}}{\hat{\sigma}} &\text{ if } 0<\class<\Class \\
1  &\text{ if }\class = \Class \eqs,
\end{cases}
\end{equation*}
where $F_{\mathcal{N}(0,1)}$ is the cdf of a zero-mean standard Gaussian random variable.

The choice of the regularization parameter $\lambda$ has been solved for
all methods by performing 5-fold cross-validation on a geometric grid of size $0.6 \log(n)$
(note that the estimators are null for $\lambda$ greater than $\| \nabla \Lik(0)\|_{\sigma, \infty}$).

As evidenced in \Cref{fig:KL},
the Kullback-Leibler divergence for the
logistic estimator is significantly lower than for the Gaussian estimator, for both the  $\Class=2$ and $\Class=5$ cases.
This was expected because the Gaussian
model assume implicitly symmetric distributions with the same variance for all the ratings,
These assumptions are of course avoided by the  logistic modem.

Regarding the prediction error, \Cref{tab:sim_bin_error} and \Cref{tab:sim_mult_error} summarize
the results obtained for a $1000 \times 600$ matrix.
The logistic model outperforms the Gaussian model (slightly for $\Class=2$ and significantly for  $\Class=5$).


\begin{figure}[h!]
\centering
\begin{minipage}{0.49\textwidth}
\includegraphics[width=1.02\textwidth]{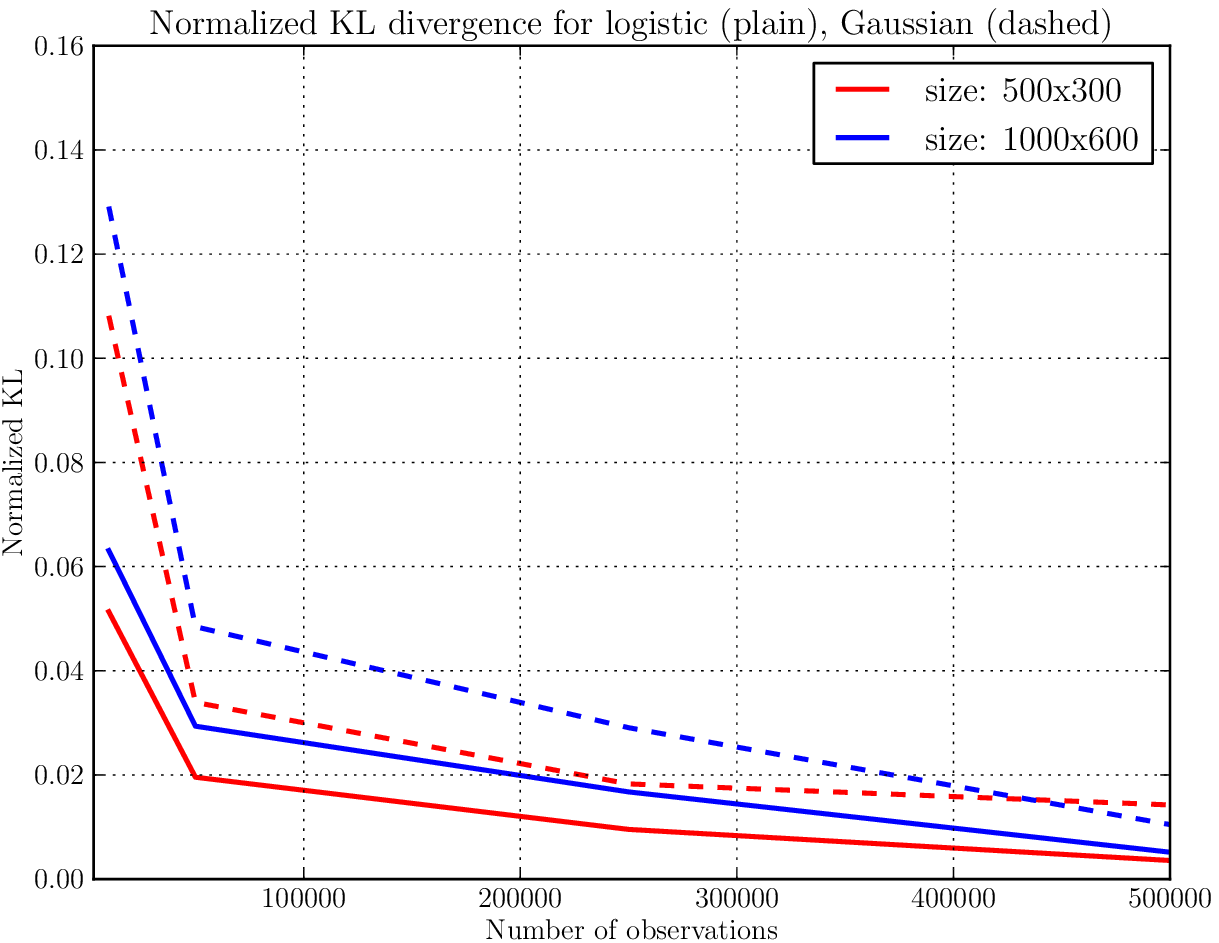}
\end{minipage}
\begin{minipage}{0.49\textwidth}
 \includegraphics[width=1.02\textwidth]{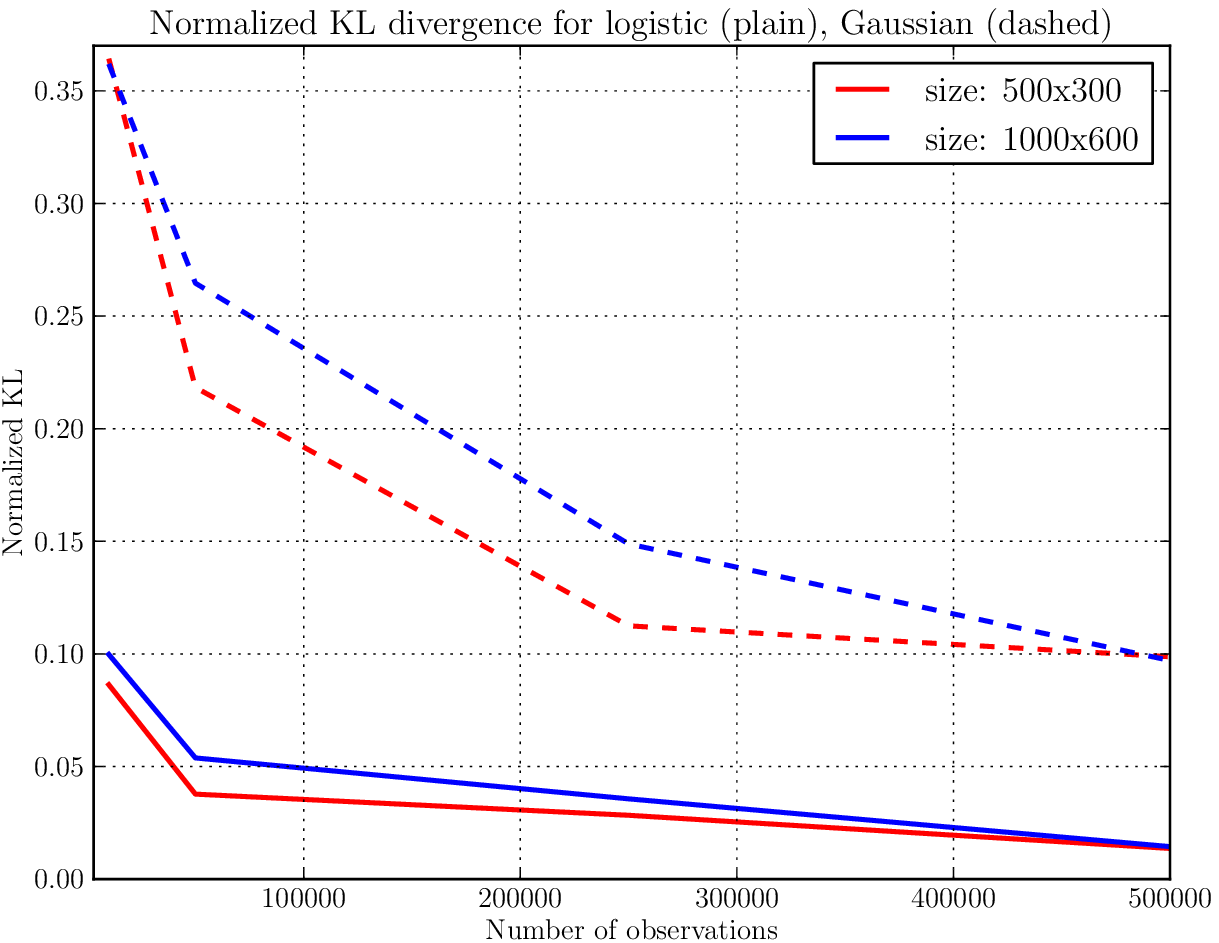}
\end{minipage}
\caption{ Kullback-Leibler divergence between the estimated and the true model
for different matrices sizes and sampling fraction, normalized by number of classes.
Right figure: binomial and the Gaussian models ;
left figure: multinomial with five classes and Gaussian model.}
\label{fig:KL}
\end{figure}

\begin{table}[h!]
\centering
\begin{tabular}{|l|c|c|c|c|}
   \hline
   \textbf{Number of observations} & $10\cdot 10^3$ &$50\cdot 10^3$ &$250\cdot 10^3$ &$500\cdot 10^3$   \\
   \hline
   \textbf{Gaussian prediction error} & $0.50$ &$0.38$ & $0.32$ & $0.32$  \\
   \hline
   \textbf{Logistic prediction error}& $0.46$ & $0.33$ & $0.31$ & $0.31$  \\
   \hline
\end{tabular}
\caption{Prediction errors for a binomial (2 classes) underlying model, for a $1000 \times 600$ matrix.}
\label{tab:sim_bin_error}
\end{table}

\begin{table}[h!]
\centering
\begin{tabular}{|l|c|c|c|c|}
   \hline
   \textbf{Number of observations} & $10\cdot 10^3$ &$50\cdot 10^3$ &$250\cdot 10^3$ &$500\cdot 10^3$   \\
   \hline
   \textbf{Gaussian prediction error} & $0.75$ &$0.75$ & $0.72$ & $0.71$  \\
   \hline
   \textbf{Logistic prediction error}& $0.75$ & $0.67$ & $0.58$ & $0.57$  \\
   \hline
\end{tabular}
\caption{Prediction Error for a multinomial (5 classes) distribution against a $1000 \times 600$ matrix.}
\label{tab:sim_mult_error}
\end{table}

We have also run the same estimators on the MovieLens $100k$ dataset. In this case,
the Kullback-Leibler divergence cannot be computed. Therefore, to assess the prediction errors,
we randomly select $20\%$ of the entries as a test set,
and the remaining entries are split between a training set ($80\%$) and a validation set ($20\%$).

For this dataset, ratings range from $1$ to $5$. To consider the benefit of a binomial model, we have tested
each rating against the others (\eg ratings $5$ are set to $0$ and all others are set to $1$).

These results are summarized in \Cref{tab:ml_bin}. For the multinomial case,
we find a prediction error of $0.59$ for the logistic model against a $0.63$ for the Gaussian one.

\begin{table*}[h]
\centering
\begin{tabular}{|l|c|c|c|c|c|}
   \hline
   \textbf{Rating against the others} & $1$ &$2$ &$3$ &$4$ & $5$   \\
   \hline
   \textbf{Gaussian prediction error} & $0.12$ &$0.20$ & $0.39$ & $0.46$ &  $0.30$\\
   \hline
   \textbf{Logistic prediction error}& $0.06$ &$0.11$ & $0.27$ & $0.34$ &$0.20$  \\
   \hline
\end{tabular}
\caption{Binomial prediction error when performing one versus the others procedure on the MovieLens $100k$
dataset.}
\label{tab:ml_bin}
\end{table*}

\section{Proofs of main results}
\label{sec:proof}
\subsection{Proof of \Cref{col:bin} and \Cref{th1}}
\label{proofth1}
\begin{proof}
Since \Cref{col:bin} is an application of \Cref{th1} for $\Class=2$ and $\qClass=1$ it suffices to prove \Cref{th1}.

We consider a tensor $\ten{X}$ which satisfies $\Obj(\ten{X}) \leq \Obj(\ten{\tX})$, (\eg $\ten{X}=\hat{\ten{X}}$).
We get from \Cref{InegBeg}
\begin{equation}
\label{th1:ineg1}
\Lik(\ten{X})-\Lik(\ten{\tX})\leq \lambda \sqrt{\rth } \sqrt{\KL{f(\ten{\tX})}{f(\ten{X})}} \eqs,
\end{equation}
where
\begin{equation}
\label{eq:definition-bar-r}
\bar{r} = \frac{2m_1m_2 \mrk{\ten{\tilde{X}}} }{K_\gamma} .
\end{equation}
Let us define
\begin{equation}\label{eq:def_D}
\D \left( f(\ten{\bar{X}}),f(\ten{X}) \right)\eqdef \EE\left[\left(\Lik(\ten{X})-\Lik(\ten{\bar{X}})\right) \right]\eqs,
\end{equation}
where the expectation is taken both over the $(E_i)_{1 \leq i \leq n}$ and $(Y_i)_{1 \leq i \leq n}$.
As stated in \Cref{D:K}, \Cref{A1} implies  $\mu \D\left(f(\ten{\tX}),f(\ten{X})\right) \geq   \KL{f(\ten{\tX})}{f(\ten{X})}$.
We now need to control the left hand side of \eqref{th1:ineg1} uniformly over $X$  with high probability.
Since we assume  $\lambda>2\max_{\qclass \in [\qClass]} \| \bar{\Sigma}^\qclass \|_{\sigma,\infty}$
applying \Cref{LemProj}~\eqref{NucKul}  and then \Cref{D:K} yields
\begin{equation}
\label{proof:th1ineq1}
\sum_{\qclass=1}^{\qClass}\|\mat{X}^\qclass-\mat{\bar{X}}^\qclass\|_{\sigma,1}
 \leq 4 \sqrt{\rth}\sqrt{\KL{f(\ten{\tX})}{f(\ten{X})}}
 \leq 4 \sqrt{\mu\rth} \sqrt{\D\left(f(\ten{\tX}),f(\ten{X})\right)} \eqs,
\end{equation}
Consequently, if we define $ \mathcal{C}(r)$ as
\begin{equation*}
 \mathcal{C}(r)\eqdef\left\{\ten{X} \in  \tenset{m_1}{m_2}{\qClass}
 : \:\: \sum_{\qclass=1}^{\qClass}\|\mat{X}^\qclass-\mat{\bar{X}}^\qclass\|_{\sigma,1}
 \leq \sqrt{ r \D\left(f(\ten{\tX}),f(\ten{X})\right) } \right\}\eqs,
\end{equation*}
we need to control $(\Lik(\ten{X})-\Lik(\ten{\tX}))$ for $\ten{X} \in \mathcal{C}(16 \mu \rth)$.
We have to ensure that $\D\left(f(\ten{\tX}),f(\ten{X})\right)$ is
greater than a given threshold $\beta >0$ and therefore we define the following set
\begin{equation}
\label{eq:cbeta}
 \mathcal{C}_\beta(r)=\left\{ \ten{X} \in \mathcal{C}(r), \:
 \D\left(f(\ten{\tX}),f(\ten{X})\right) \geq \beta \right\}\eqs.
\end{equation}
We then consider the two following cases.\\
\textbf{Case 1}. If $\D\left(f(\ten{\tX}),f(\ten{X})\right) > \beta$, \eqref{proof:th1ineq1} gives $X \in \mathcal{C}_\beta(16\mu\rth)$.
Plugging \Cref{DevUnifCont} in \eqref{th1:ineg1} with $\beta=2M_\gamma \sqrt{\log(d)}/(\eta\sqrt{n\log(\alpha)})$
, $\alpha=e$ and $\eta=1/(4\alpha)$ then it holds with probability at least
$1-2d^{-1}/(1-d^{-1})\geq 1-2/d$
\begin{equation*}
\frac{\D\left(f(\ten{\tX}),f(\ten{X})\right)}{2}-\epsilon(16\mu\rth,\alpha,\eta) \leq \lambda \sqrt{\rth} \sqrt{\KL{f(\ten{\tX})}{f(\ten{X})}}\eqs,
\end{equation*}
where $\epsilon$ is defined in \Cref{DevUnifCont}.
Recalling \Cref{D:K} we get
 \begin{equation*}
  \frac{\KL{f(\ten{\tX})}{f(\ten{X})}}{2\mu} -\lambda \sqrt{\rth } \sqrt{\KL{f(\ten{\tX}}{f(\ten{X})}} -\epsilon(16\mu\rth,\alpha,\eta) \leq 0\eqs.
 \end{equation*}
An analysis of this second order polynomial and the relation
$\epsilon(16\mu\rth,\alpha,\eta)/\mu=\epsilon(16\rth,\alpha,\eta)$ lead to

\begin{equation}
 \label{th1bound1}
 \sqrt{\KL{f(\ten{\tX})}{f(\ten{X})}} \leq \mu  \left( \lambda \sqrt{\rth } + \sqrt{\lambda^2\rth + 2 \epsilon(16\rth,\alpha,\eta)}\right) \eqs.
\end{equation}
Applying the inequality $(a+b)^2 \leq 2(a^2+b^2)$ gives the bound of \Cref{th1}.\\
\textbf{Case 2}. If $\D\left(f(\ten{\tX}),f(\ten{X})\right)\leq \beta$ then \Cref{D:K} yields
\begin{equation}
 \label{th1bound2}
 \KL{f(\ten{\tX})}{f(\ten{X})}\leq \mu \beta \eqs.
\end{equation}
Combining \eqref{th1bound1} and \eqref{th1bound2} concludes the proof.
\end{proof}


For $\mat{X} \in  \matset{m_1}{m_2}$,  denote by $\mathcal{S}_1(\mat{X}) \subset \RR^{m_1}$ (\resp $\mathcal{S}_2(\mat{X}) \subset \RR^{m_2}$)
the linear spans generated by left (\resp right) singular vectors of $\mat{X}$.
$P_{\mathcal{S}^\bot_1(\mat{X})}$  (\resp $P_{\mathcal{S}^\bot_2(\mat{X})}$)
denotes the orthogonal projections on $\mathcal{S}^\bot_1(\mat{X})$ (\resp $\mathcal{S}^\bot_2(\mat{X})$).
We then define the following orthogonal projections on $\RR^{m_1 \times m_2}$
\begin{equation*}
\Proj_{\mat{X}}^\bot:\mat{\tilde{X}} \mapsto P_{\mathcal{S}^\bot_1(\mat{X})}\mat{\tilde{X}} P_{\mathcal{S}^\bot_2(\mat{X})}
\text{ and } \Proj_{\mat{X}}: \mat{\tilde{X}} \mapsto \mat{\tilde{X}}-\Proj_{\mat{X}}^\bot(\mat{\tilde{X}})\eqs.
\end{equation*}

\begin{lemma}
 \label{InegBeg}
 Let $\ten{X},\ten{\tilde{X}} \in \tenset{m_1}{m_2}{\qClass}$ satisfying  $\Obj(\ten{X}) \leq \Obj(\ten{\tilde{X}})$, then
 \begin{equation*}
\Lik(\ten{X})-\Lik(\ten{\tilde{X}})\leq \lambda \bar{r}^{1/2}  \sqrt{\KL{f(\ten{\tilde{X}})}{f(\ten{X})}} \eqs,
 \end{equation*}
 where $\bar{r}$ is defined in \eqref{eq:definition-bar-r}.
\end{lemma}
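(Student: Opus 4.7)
The plan is to reduce the lemma to three standard steps. First, the optimality hypothesis $\Obj(\ten{X}) \leq \Obj(\ten{\tilde X})$ unfolds, via the definition of $\Obj$ in \eqref{eq:MinPb}, into
\[
\Lik(\ten{X}) - \Lik(\ten{\tilde X}) \;\leq\; \lambda \sum_{\qclass=1}^{\qClass} \bigl(\|\mat{\tilde X}^\qclass\|_{\sigma,1} - \|\mat{X}^\qclass\|_{\sigma,1}\bigr),
\]
so the whole task reduces to controlling this sum of nuclear-norm slacks by $\bar r^{1/2}\sqrt{\KL{f(\ten{\tilde X})}{f(\ten{X})}}$.

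The second step is a projector decomposition at each $\mat{\tilde X}^\qclass$. Setting $\Delta^\qclass \eqdef \mat X^\qclass - \mat{\tilde X}^\qclass$ and observing that $\mat{\tilde X}^\qclass$ and $\Proj_{\mat{\tilde X}^\qclass}^\bot(\Delta^\qclass)$ have orthogonal row and column spaces, the standard pinching inequality for the nuclear norm reads
\[
\|\mat X^\qclass\|_{\sigma,1} \;\geq\; \|\mat{\tilde X}^\qclass\|_{\sigma,1} + \|\Proj_{\mat{\tilde X}^\qclass}^\bot(\Delta^\qclass)\|_{\sigma,1} - \|\Proj_{\mat{\tilde X}^\qclass}(\Delta^\qclass)\|_{\sigma,1}.
\]
Dropping the non-negative orthogonal term and using $\rank(\Proj_{\mat{\tilde X}^\qclass}(\Delta^\qclass))\leq 2\rank(\mat{\tilde X}^\qclass)\leq 2\,\mrk{\ten{\tilde X}}$, the Cauchy--Schwarz bound $\|A\|_{\sigma,1}\leq \sqrt{\rank(A)}\,\|A\|_{\sigma,2}$, and the contraction of projectors in Schatten-$2$ norm, I obtain
\[
\|\mat{\tilde X}^\qclass\|_{\sigma,1} - \|\mat X^\qclass\|_{\sigma,1} \;\leq\; \sqrt{2\,\mrk{\ten{\tilde X}}}\;\|\Delta^\qclass\|_{\sigma,2}.
\]

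The third step, the only one carrying real content, is the passage from squared Frobenius distance to Kullback--Leibler divergence via $K_\gamma$. Applying definition \eqref{eq:definition:K-gamma} entry-wise and summing over $(k,k')\in [m_1]\times[m_2]$, and then using the classical bound $\dhes{\cdot}{\cdot}\leq \KL{\cdot}{\cdot}$, yields
\[
\sum_{\qclass=1}^{\qClass}\|\Delta^\qclass\|_{\sigma,2}^2 \;\leq\; \frac{m_1 m_2}{K_\gamma}\,\KL{f(\ten{\tilde X})}{f(\ten{X})}.
\]
A Cauchy--Schwarz step over $\qclass$ then combines this with the per-class bound of step two into an inequality of the form $\sum_\qclass (\|\mat{\tilde X}^\qclass\|_{\sigma,1}-\|\mat X^\qclass\|_{\sigma,1}) \leq \sqrt{\bar r}\sqrt{\KL{f(\ten{\tilde X})}{f(\ten X)}}$, up to a $\sqrt{\qClass}$ factor that is trivial in the binary case $\qClass=1$ of \Cref{col:bin} and is absorbed into the $L_\gamma$-dependent constants of \Cref{th1} and \Cref{th2}. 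Plugging the resulting bound into the opening display finishes the proof; I expect the only point where constants need careful tracking to be precisely this last Cauchy--Schwarz step.
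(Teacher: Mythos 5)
Your proposal follows essentially the same route as the paper's own proof: the optimality of $\ten{X}$ reduces the claim to the nuclear-norm slack, which is controlled by the pinching/projector decomposition at $\mat{\tilde X}^\qclass$ (the paper's \Cref{algebricres}, items \eqref{diffschatten} and \eqref{ProjRel}), and then converted to the Kullback--Leibler divergence through the definition of $K_\gamma$ and the bound $\mathrm{d}_H^2 \leq \mathrm{KL}$ (\Cref{hel:kul}, \Cref{HelFro}). Your explicit flagging of the extra $\sqrt{\qClass}$ from the final Cauchy--Schwarz over $\qclass$ is in fact more careful than the paper, which silently passes from $\sum_\qclass \|\Delta^\qclass\|_{\sigma,2}$ to $\sqrt{\sum_\qclass \|\Delta^\qclass\|_{\sigma,2}^2}$; this factor is immaterial for $\qClass=1$ and only affects constants elsewhere.
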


\begin{proof}
Since $\Obj(\ten{X}) \leq \Obj(\ten{\tilde{X}})$, we obtain
 \begin{align*}
\Lik(\ten{X})-\Lik(\ten{\tilde{X}}) \leq &\lambda \sum_{\qclass=1}^{\qClass} (\|\mat{\tilde{X}}^\qclass \|_{\sigma,1}-\|\mat{X}^\qclass\|_{\sigma,1})
\leq \lambda \sum_{\qclass=1}^{\qClass} \|\Proj_{\mat{\tilde{X}}^\qclass}(\mat{X}-\mat{\tilde{X}}^\qclass)\|_{\sigma,1} \eqs, \\
					    \leq &\lambda \sqrt{2 \mrk{\ten{\tilde{X}}} } \left( \sum_{\qclass=1}^{\qClass} \|\mat{X}-\mat{\tilde{X}}\|_{\sigma,2}\right)\eqs,\\
\end{align*}
where we have used \Cref{algebricres}-\eqref{ProjRel} and \eqref{diffschatten} and  for the last two lines and
the definition of $K_\gamma$ and \Cref{hel:kul} to get the result.
\end{proof}

\begin{lemma}
\label{algebricres}
 For any pair of matrices $\mat{X}, \, \mat{\tilde{X}} \in \matset{m_1}{m_2}$ we have
 \begin{enumerate}[(i)]
  \item $ \|\mat{X} + \Proj_{\mat{X}}^\bot(\mat{\tilde{X}}) \|_{\sigma,1}=\|\mat{X}\|_{\sigma,1} +  \|\Proj_{\mat{X}}^\bot(\mat{\tilde{X}}) \|_{\sigma,1}\eqs,$ \label{PenRel}
  \item $\| \Proj_{\mat{X}}(\mat{\tilde{X}}) \|_{\sigma,1} \leq \sqrt{2 \rank(\mat{X})} \|\mat{\tilde{X}}\|_{\sigma,2}\eqs,$ \label{ProjRel}
  \item  $ \|\mat{X}\|_{\sigma,1}-\|\mat{\tilde{X}}\|_{\sigma,1} \leq  \|\Proj_{\mat{X}}(\mat{\tilde{X}}-\mat{X})\|_{\sigma,1}\eqs.$ \label{diffschatten}
  \end{enumerate}

\end{lemma}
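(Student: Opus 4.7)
My plan is to exploit the fact that the decomposition $\mat{\tilde X} = \Proj_{\mat X}(\mat{\tilde X}) + \Proj_{\mat X}^\bot(\mat{\tilde X})$ splits $\mat{\tilde X}$ into two pieces with specific, controllable column and row supports with respect to the singular subspaces $\mathcal{S}_1(\mat X)$ and $\mathcal{S}_2(\mat X)$. Throughout, write $P_1 = P_{\mathcal{S}_1(\mat X)}$, $P_2 = P_{\mathcal{S}_2(\mat X)}$, and similarly for the orthogonal complements. The matrix $\mat X$ has $\mathrm{Im}(\mat X)\subset \mathcal{S}_1(\mat X)$ and $\mathrm{Im}(\mat X^\top)\subset \mathcal{S}_2(\mat X)$, while $\Proj_{\mat X}^\bot(\mat{\tilde X})= P_1^\bot \mat{\tilde X} P_2^\bot$ lives in the orthogonal complements on both sides.

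For (i), I would use the standard fact that if two matrices $A,B$ satisfy $A B^{\top}=0$ and $A^{\top} B = 0$ (orthogonal column \emph{and} row spaces), then $\|A+B\|_{\sigma,1}=\|A\|_{\sigma,1}+\|B\|_{\sigma,1}$; this follows by stacking an SVD of $A$ and of $B$ to produce an SVD of $A+B$. The assumptions hold for $A=\mat X$ and $B=\Proj_{\mat X}^\bot(\mat{\tilde X})$ by the remarks above, giving (i) directly.

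For (ii), I would write $\Proj_{\mat X}(\mat{\tilde X}) = P_1\mat{\tilde X} + P_1^\bot\mat{\tilde X}P_2$; call these two summands $A$ and $B$. Each has rank at most $\mathrm{rank}(\mat X)$, so $\mathrm{rank}(A+B)\leq 2\,\mathrm{rank}(\mat X)$. Because $B^{\top}A = P_2\mat{\tilde X}^{\top} P_1^\bot P_1\mat{\tilde X} = 0$ the columns of $A$ and $B$ are orthogonal, so $(A+B)(A+B)^{\top}=AA^{\top}+BB^{\top}$ and therefore $\|A+B\|_{\sigma,2}^2=\|A\|_{\sigma,2}^2+\|B\|_{\sigma,2}^2\leq \|P_1\mat{\tilde X}\|_{\sigma,2}^2+\|P_1^\bot\mat{\tilde X}\|_{\sigma,2}^2 = \|\mat{\tilde X}\|_{\sigma,2}^2$, using that $P_2$ is a contraction in the Frobenius norm and that $P_1+P_1^\bot = I$. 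Combining with the general inequality $\|C\|_{\sigma,1}\leq \sqrt{\mathrm{rank}(C)}\,\|C\|_{\sigma,2}$ (Cauchy--Schwarz on the singular values) applied to $C=A+B$ yields the bound $\sqrt{2\,\mathrm{rank}(\mat X)}\,\|\mat{\tilde X}\|_{\sigma,2}$.

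Finally, for (iii) I would apply (i) with $\mat X$ replaced by $\mat X + \Proj_{\mat X}(\mat{\tilde X}-\mat X)$, whose row and column spaces still lie in $\mathcal{S}_1(\mat X)$ and $\mathcal{S}_2(\mat X)$. Since $\Proj_{\mat X}^\bot(\mat{\tilde X}-\mat X)=\Proj_{\mat X}^\bot(\mat{\tilde X})$, decomposing $\mat{\tilde X}$ accordingly and invoking (i) gives
\begin{equation*}
\|\mat{\tilde X}\|_{\sigma,1} = \|\mat X + \Proj_{\mat X}(\mat{\tilde X}-\mat X)\|_{\sigma,1} + \|\Proj_{\mat X}^\bot(\mat{\tilde X})\|_{\sigma,1}.
\end{equation*}
The reverse triangle inequality on the first summand, together with discarding the nonnegative second summand, yields (iii). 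None of the steps is really an obstacle; the only thing to be careful about is verifying in (ii) that the two summands have orthogonal column spaces so that the squared Frobenius norms add, which is what gives the $\sqrt{2}$ rather than $2$.
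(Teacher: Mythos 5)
Parts (i) and (ii) of your argument are correct and essentially coincide with the paper's proof: for (i) both use the additivity of the nuclear norm over matrices with mutually orthogonal row and column spaces, and for (ii) the paper simply invokes that $\Proj_{\mat{X}}$ is an orthogonal projector for the trace inner product to get $\|\Proj_{\mat{X}}(\mat{\tilde{X}})\|_{\sigma,2}\leq\|\mat{\tilde{X}}\|_{\sigma,2}$, where you verify the Pythagorean identity by hand; both are fine and give the same $\sqrt{2\rank(\mat{X})}$ factor.

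Part (iii), however, contains a genuine gap. You claim that $\mat{X}+\Proj_{\mat{X}}(\mat{\tilde{X}}-\mat{X})$ has row and column spaces inside $\mathcal{S}_2(\mat{X})$ and $\mathcal{S}_1(\mat{X})$, and on that basis assert the equality
\begin{equation*}
\|\mat{\tilde{X}}\|_{\sigma,1} = \|\mat{X} + \Proj_{\mat{X}}(\mat{\tilde{X}}-\mat{X})\|_{\sigma,1} + \|\Proj_{\mat{X}}^\bot(\mat{\tilde{X}})\|_{\sigma,1}.
\end{equation*}
Both claims are false. Writing $P_i=P_{\mathcal{S}_i(\mat{X})}$, one has $\Proj_{\mat{X}}(\mat{Y})=P_1\mat{Y}+P_1^\bot \mat{Y}P_2$, and the second summand has column space in $\mathcal{S}_1^\bot(\mat{X})$; hence $\mat{X}+\Proj_{\mat{X}}(\mat{\tilde{X}}-\mat{X})$ is not supported on $\mathcal{S}_1(\mat{X})\times\mathcal{S}_2(\mat{X})$ and is not orthogonal, in the sense required by (i), to $\Proj_{\mat{X}}^\bot(\mat{\tilde{X}})$. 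Concretely, take $m_1=m_2=2$, $\mat{X}=\mathrm{diag}(1,0)$ and $\mat{\tilde{X}}$ the all-ones matrix: then $\mat{X}+\Proj_{\mat{X}}(\mat{\tilde{X}}-\mat{X})=\left(\begin{smallmatrix}1&1\\1&0\end{smallmatrix}\right)$ has nuclear norm $\sqrt{5}$, $\|\Proj_{\mat{X}}^\bot(\mat{\tilde{X}})\|_{\sigma,1}=1$, while $\|\mat{\tilde{X}}\|_{\sigma,1}=2\neq\sqrt{5}+1$; worse, even the weaker inequality $\|\mat{\tilde{X}}\|_{\sigma,1}\geq\|\mat{X}+\Proj_{\mat{X}}(\mat{\tilde{X}}-\mat{X})\|_{\sigma,1}$, which is all your final step actually needs, already fails since $2<\sqrt{5}$. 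The repair is to group $\mat{X}$ with the \emph{other} piece: $\mat{X}$ and $\Proj_{\mat{X}}^\bot(\mat{\tilde{X}}-\mat{X})$ genuinely have mutually orthogonal row and column spaces, so the triangle inequality followed by (i) gives
\begin{align*}
\|\mat{\tilde{X}}\|_{\sigma,1}
&\geq\|\mat{X}+\Proj_{\mat{X}}^\bot(\mat{\tilde{X}}-\mat{X})\|_{\sigma,1}-\|\Proj_{\mat{X}}(\mat{\tilde{X}}-\mat{X})\|_{\sigma,1}\\
&=\|\mat{X}\|_{\sigma,1}+\|\Proj_{\mat{X}}^\bot(\mat{\tilde{X}}-\mat{X})\|_{\sigma,1}-\|\Proj_{\mat{X}}(\mat{\tilde{X}}-\mat{X})\|_{\sigma,1},
\end{align*}
and discarding the nonnegative middle term yields (iii); this is exactly the paper's argument.
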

 \begin{proof}
If $A, B \in \rset^{m_1 \times m_2}$ are two matrices satisfying $\mathcal{S}_i(A) \perp \mathcal{S}_i(B)$, $i=1,2$,
then $\| A + B \|_{\sigma,1}= \| A \|_{\sigma,1} + \| B \|_{\sigma,1}$. Applying this identity with $A = X$ and $B= \Proj_{\mat{X}}^\bot(\mat{\tilde{X}})$, we obtain
 \[
 \|X + \Proj_{\mat{X}}^\bot(\mat{\tilde{X}}) \|_{\sigma,1}
 = \|X\|_{\sigma,1} +  \|\Proj_{\mat{X}}^\bot(\mat{\tilde{X}}) \|_{\sigma,1}\eqs,
 \]
 showing~\eqref{PenRel}.

 It follows from the definition that
 $\Proj_{\mat{X}}(\mat{\tilde{X}})=P_{\mathcal{S}_1(\mat{X})}\mat{\tilde{X}} P_{\mathcal{S}^\bot_2(\mat{X})}+\mat{\tilde{X}}P_{\mathcal{S}_2(\mat{X})}$.
 Note that $\Proj_{\mat{X}}$ is an orthogonal projector on $\rset^{m_1\times m_2}$ equipped with the euclidean product $\pscal{\cdot}{\cdot}$.
 On the other hand, the Cauchy-Schwarz inequality implies that for any matrix $C$,  $\|C\|_{\sigma,1} \leq \sqrt{ \rank(C)} \|C\|_{\sigma,2}$. Consequently  \eqref{ProjRel} follows from
 \begin{align*}
 \| \Proj_{\mat{X}}(\mat{\tilde{X}}) \|_{\sigma,1} &\leq \sqrt{2 \rank(X)} \|\Proj_{\mat{X}}(\mat{\tilde{X}})\|_{\sigma,2}\eqs \leq \sqrt{2 \rank(X)} \|\mat{\tilde{X}}\|_{\sigma,2}\eqs \eqsp.
 \end{align*}
 Finally, since $\mat{\tilde{X}}=\mat{X} + \Proj_{\mat{X}}^\bot(\mat{\tilde{X}}-\mat{X})+\Proj_{\mat{X}}(\mat{\tilde{X}}-\mat{X})$ we have
 \begin{align*}
  \|\mat{\tilde{X}} \|_{\sigma,1} &\geq \|\mat{X} + \Proj_{\mat{X}}^\bot(\mat{\tilde{X}}-\mat{X}) \|_{\sigma,1} -\|\Proj_{\mat{X}}(\mat{\tilde{X}}-\mat{X}) \|_{\sigma,1} \eqs, \\
  &= \|\mat{X} \|_{\sigma,1} + \| \Proj_{\mat{X}}^\bot(\mat{\tilde{X}}-\mat{X}) \|_{\sigma,1} -\|\Proj_{\mat{X}}(\mat{\tilde{X}}-\mat{X}) \|_{\sigma,1} \eqs,
 \end{align*}
 leading to \eqref{diffschatten}.
 \end{proof}

\begin{lemma}
\label{hel:kul}
For any tensor $\ten{X},\ten{\tilde{X}} \in \tenset{m_1}{m_2}{\qClass}$  and
$\Class$-link function $f$ it holds:
\begin{equation*}
  \dhes{f(\ten{X} ) }{f(\ten{\tilde{X}} )} \leq \KL{f(\ten{X})}{f(\ten{\tilde{X}})}
\end{equation*}
\end{lemma}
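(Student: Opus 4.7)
The plan is to reduce the tensor-level inequality to the classical scalar inequality between squared Hellinger distance and Kullback--Leibler divergence between two discrete probability distributions, applied pointwise at each index $(k,k')\in[m_1]\times[m_2]$, and then average over the grid. Since both $\dhes{\cdot}{\cdot}$ and $\KL{\cdot}{\cdot}$ are defined as the same normalized sum $\tfrac{1}{m_1m_2}\sum_{k,k'}(\cdots)$ of a per-entry quantity, it suffices to prove the inequality entrywise.

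Fix $(k,k')$ and write $p_\class \eqdef f^\class(\ten{X}_{k,k'})$ and $q_\class \eqdef f^\class(\ten{\tilde{X}}_{k,k'})$. Because $f$ is a $\Class$-link function (i.e.\ takes values in the $\Class$-dimensional probability simplex $\mathcal{S}_\Class$), the vectors $(p_\class)_{\class=1}^{\Class}$ and $(q_\class)_{\class=1}^{\Class}$ are honest probability distributions on $[\Class]$. The goal is then the textbook bound
\begin{equation*}
\sum_{\class=1}^{\Class}\left(\sqrt{p_\class}-\sqrt{q_\class}\right)^{2} \;\leq\; \sum_{\class=1}^{\Class} p_\class \log\!\left(\frac{p_\class}{q_\class}\right).
\end{equation*}

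For this, I would expand the left-hand side using $\sum p_\class=\sum q_\class=1$ to obtain $\sum_\class(\sqrt{p_\class}-\sqrt{q_\class})^{2}=2-2\sum_\class\sqrt{p_\class q_\class}$. For the right-hand side, rewrite $\sum_\class p_\class\log(p_\class/q_\class)=-2\sum_\class p_\class\log\sqrt{q_\class/p_\class}$ and apply the elementary inequality $-\log(x)\geq 1-x$ (valid for $x>0$) with $x=\sqrt{q_\class/p_\class}$. This yields
\begin{equation*}
\sum_{\class=1}^{\Class} p_\class\log\!\left(\frac{p_\class}{q_\class}\right)\;\geq\; 2\sum_{\class=1}^{\Class}p_\class\left(1-\sqrt{q_\class/p_\class}\right) \;=\; 2-2\sum_{\class=1}^{\Class}\sqrt{p_\class q_\class},
\end{equation*}
which exactly matches the rewriting of the Hellinger term. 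Degenerate cases are handled by the usual conventions ($0\log 0 = 0$, and the inequality is trivial if some $q_\class=0$ while $p_\class>0$, since then the right-hand side equals $+\infty$).

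Summing the entrywise bound over $(k,k')\in[m_1]\times[m_2]$ and dividing by $m_1 m_2$ gives the claimed inequality. There is no real obstacle here: the only subtlety is keeping track of the fact that the link function maps into the simplex so that the scalar argument applies, and handling boundary cases where probabilities vanish; both are routine.
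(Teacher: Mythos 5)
Your proof is correct and matches the paper's approach: the paper simply cites the classical Hellinger--Kullback--Leibler inequality (\cite[Lemma 4.2]{Tsybakov09}), and your entrywise reduction followed by the standard argument via $2-2\sum_\class\sqrt{p_\class q_\class}$ and $-\log(x)\geq 1-x$ is exactly the textbook proof of that cited result. No issues.
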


\begin{proof}
See \cite[Lemma 4.2]{Tsybakov09}
\end{proof}

\begin{lemma}
\label{HelFro}
For any $\Class, \qClass>0$ and $\Class$-link function $f$
and any $\ten{X},\ten{\tilde{X}}  \in  \tenset{m_1}{m_2}{\qClass}$
satisfying $\|\ten{X}\|_{\infty}\leq \gamma$ and $\|\ten{\tilde{X}}\|_{\infty}\leq \gamma$, we get:
 \begin{equation*}
 \sum_{\qclass=1}^{\qClass} \|\mat{X^\qclass}-\mat{\tilde{X^\qclass}}\|^2_{\sigma,2} \leq \frac{m_1 m_2}{K_\gamma}
 \dhes{f(\ten{X})}{f(\ten{\tilde{X}}}\leq \frac{m_1 m_2}{K_\gamma} \KL{f(\ten{X})}{f(\ten{\tilde{X}}} \eqs.
 \end{equation*}
\end{lemma}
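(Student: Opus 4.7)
The plan is to establish the two inequalities in the statement separately. The second one, namely $\dhes{f(\ten{X})}{f(\ten{\tilde{X}})}\leq \KL{f(\ten{X})}{f(\ten{\tilde{X}})}$, follows immediately from \Cref{hel:kul} applied entry-wise (or equivalently, from the classical fact that the squared Hellinger distance is dominated by the Kullback–Leibler divergence, averaged over the $m_1 m_2$ indices), so no real work is required there.

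The main content is the first inequality. The key observation is that the constant $K_\gamma$ in \Cref{A0m} (equation \eqref{eq:definition:K-gamma}) is precisely designed to provide a pointwise link between the squared Euclidean distance in the parameter space $\RR^\qClass$ and the local squared Hellinger distance between the images under $f$. More precisely, for every pair of indices $(k,k')\in[m_1]\times[m_2]$, the vectors $\ten{X}_{k,k'}=(\mat{X}^\qclass_{k,k'})_{\qclass=1}^\qClass$ and $\ten{\tilde{X}}_{k,k'}=(\mat{\tilde{X}}^\qclass_{k,k'})_{\qclass=1}^\qClass$ both have $\ell^\infty$-norm at most $\gamma$ (because $\|\ten{X}\|_\infty\leq\gamma$, $\|\ten{\tilde{X}}\|_\infty\leq\gamma$), and therefore \eqref{eq:definition:K-gamma} gives
\begin{equation*}
K_\gamma \,\bigl\|\ten{X}_{k,k'}-\ten{\tilde{X}}_{k,k'}\bigr\|_2^{2} \;\leq\; \sum_{\class=1}^{\Class}\Bigl(\sqrt{f^\class(\ten{X}_{k,k'})}-\sqrt{f^\class(\ten{\tilde{X}}_{k,k'})}\Bigr)^{2}\eqsp.
\end{equation*}

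Next, I would sum this inequality over $(k,k')\in[m_1]\times[m_2]$. The left-hand side becomes
\begin{equation*}
K_\gamma \sum_{k,k'} \sum_{\qclass=1}^{\qClass} \bigl(\mat{X}^\qclass_{k,k'}-\mat{\tilde{X}}^\qclass_{k,k'}\bigr)^{2}
= K_\gamma \sum_{\qclass=1}^{\qClass}\bigl\|\mat{X}^\qclass-\mat{\tilde{X}}^\qclass\bigr\|_{\sigma,2}^{2}\eqsp,
\end{equation*}
simply by swapping the order of summation and recognizing the squared Frobenius (Schatten-2) norm. On the right-hand side, the double sum over $(k,k')$ is exactly $m_1 m_2$ times the definition of the squared Hellinger distance $\dhes{f(\ten{X})}{f(\ten{\tilde{X}})}$. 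Rearranging yields the first inequality of the lemma.

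There is no real obstacle: the proof is essentially a book-keeping exercise making the definitions of $K_\gamma$ and of the squared Hellinger distance meet. The only point to be careful about is to verify that the hypotheses $\|\ten{X}\|_\infty\leq\gamma$ and $\|\ten{\tilde{X}}\|_\infty\leq\gamma$ indeed imply $\|\ten{X}_{k,k'}\|_\infty\leq\gamma$ and $\|\ten{\tilde{X}}_{k,k'}\|_\infty\leq\gamma$ for all $(k,k')$, which is immediate from the definition $\|\ten{X}\|_\infty=\max_\qclass\|\mat{X}^\qclass\|_\infty$.
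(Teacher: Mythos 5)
Your proposal is correct and follows essentially the same route as the paper's own (very terse) proof: apply the defining inequality \eqref{eq:definition:K-gamma} for $K_\gamma$ pointwise at each entry $(k,k')$, sum to identify the Frobenius norms and the squared Hellinger distance, and invoke \Cref{hel:kul} for the second inequality. The only detail the paper adds is that in the one-bit case ($\Class=2$, $\qClass=1$) the pointwise bound comes from \Cref{rem:K_gamma} rather than directly from \Cref{A0m}, a distinction your argument absorbs without difficulty.
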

\begin{proof}
For $\Class=2$  and $\qClass=1$, it is a consequence of \Cref{rem:K_gamma} and  \Cref{hel:kul}.
Otherwise, the proof follows from the definition  \eqref{eq:definition:K-gamma} 
of $K_\gamma$ and \Cref{hel:kul}.
\end{proof}

\begin{lemma}
\label{LemProj}
Let $\ten{X},\ten{\tilde{X}} \in \tenset{m_1}{m_2}{\qClass}$ satisfying $\|\ten{X}\|_{\infty}\leq \gamma$ and $\|\ten{\tilde{X}}\|_{\infty}\leq \gamma$.
Assume that $\lambda>2\max_{\qclass \in [\qClass]} \| \glik^\qclass(\mat{\tilde{X}}) \|_{\sigma,\infty}$
and $\Obj(\mat{X}) \leq \Obj(\mat{\tilde{X}})$. Then
\begin{align}
&\sum_{\qclass=1}^{\qClass}\|\Proj_{\mat{\tilde{X}}^\qclass}^\bot(\mat{X}^\qclass-\mat{\tilde{X}}^\qclass)\|_{\sigma,1} \leq 3 \sum_{\qclass=1}^{\qClass} \|\Proj_{\mat{\tilde{X}}^\qclass}(\mat{X}^\qclass-\mat{\tilde{X}}^\qclass)\|_{\sigma,1}\eqs,  \label{projrel} \\
&\sum_{\qclass=1}^{\qClass}\|\mat{X}^\qclass-\mat{\tilde{X}}^\qclass\|_{\sigma,1}
\leq 4 \sqrt{2 \rank(\ten{\tilde{X}})}\sum_{\qclass=1}^{\qClass} \|(\mat{X}^\qclass-\mat{\tilde{X}}^\qclass)\|_{\sigma,2}\eqs, \label{NucFob} \\
&\sum_{\qclass=1}^{\qClass}\|\mat{X}^\qclass-\mat{\tilde{X}}^\qclass\|_{\sigma,1}
\leq 4 \sqrt{ 2 m_1 m_2  \rank(\ten{\tilde{X}})/ K_\gamma}   \dhel{f(\ten{\tilde{X}}),f(\ten{X})}\eqs, \label{NucHel} \\
&\sum_{\qclass=1}^{\qClass}\|\mat{X}^\qclass-\mat{\tilde{X}}^\qclass\|_{\sigma,1}
 \leq 4 \sqrt{ 2 m_1 m_2  \rank(\ten{\tilde{X}})/ K_\gamma }  \sqrt{\KL{f(\ten{\tilde{X}})}{f(\ten{X})}}\eqs. \label{NucKul}
\end{align}
\end{lemma}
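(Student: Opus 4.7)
The four inequalities form a cascade, so the plan is to establish the cone-type inequality \eqref{projrel} first and then derive \eqref{NucFob}, \eqref{NucHel}, \eqref{NucKul} in turn from results already at hand. To prove \eqref{projrel}, I rely on two ingredients: the convexity of $\Lik$ (which follows from the convexity of $x \mapsto -\log f^\class(x)$ postulated in \Cref{A0}, resp.\ the analogous property underlying \Cref{A0m}) and the duality $|\pscal{A}{B}| \leq \|A\|_{\sigma,\infty} \|B\|_{\sigma,1}$. Convexity of $\Lik$ gives
\begin{equation*}
\Lik(\ten{X}) - \Lik(\ten{\tilde X}) \geq \sum_{\qclass=1}^{\qClass} \pscal{\glik^\qclass(\ten{\tilde X})}{\mat{X}^\qclass - \mat{\tilde X}^\qclass},
\end{equation*}
while $\Obj(\ten X) \leq \Obj(\ten{\tilde X})$ rewrites as $\Lik(\ten X) - \Lik(\ten{\tilde X}) \leq \lambda \sum_\qclass (\|\mat{\tilde X}^\qclass\|_{\sigma,1} - \|\mat X^\qclass\|_{\sigma,1})$. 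Chaining these and invoking duality together with the key hypothesis $\lambda > 2 \max_\qclass \|\glik^\qclass(\ten{\tilde X})\|_{\sigma,\infty}$ yields
\begin{equation*}
\sum_{\qclass=1}^{\qClass} \left(\|\mat X^\qclass\|_{\sigma,1} - \|\mat{\tilde X}^\qclass\|_{\sigma,1}\right) \leq \tfrac{1}{2} \sum_{\qclass=1}^{\qClass} \|\mat X^\qclass - \mat{\tilde X}^\qclass\|_{\sigma,1}.
\end{equation*}

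To convert this into \eqref{projrel}, I decompose $\mat X^\qclass - \mat{\tilde X}^\qclass$ along $\Proj_{\mat{\tilde X}^\qclass}$ and its orthogonal complement. \Cref{algebricres}-\eqref{PenRel} gives $\|\mat{\tilde X}^\qclass + \Proj^\bot_{\mat{\tilde X}^\qclass}(\mat X^\qclass - \mat{\tilde X}^\qclass)\|_{\sigma,1} = \|\mat{\tilde X}^\qclass\|_{\sigma,1} + \|\Proj^\bot_{\mat{\tilde X}^\qclass}(\mat X^\qclass - \mat{\tilde X}^\qclass)\|_{\sigma,1}$, and combining this with the reverse triangle inequality produces $\|\mat X^\qclass\|_{\sigma,1} - \|\mat{\tilde X}^\qclass\|_{\sigma,1} \geq \|\Proj^\bot(\cdot)\|_{\sigma,1} - \|\Proj(\cdot)\|_{\sigma,1}$. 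Summing over $\qclass$, then substituting into the previous display and using the ordinary triangle inequality $\|\mat X^\qclass - \mat{\tilde X}^\qclass\|_{\sigma,1} \leq \|\Proj(\cdot)\|_{\sigma,1} + \|\Proj^\bot(\cdot)\|_{\sigma,1}$ on the right-hand side, I can rearrange to obtain \eqref{projrel}.

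The remaining three bounds are then short. For \eqref{NucFob}, chain the triangle inequality $\sum_\qclass \|\mat X^\qclass - \mat{\tilde X}^\qclass\|_{\sigma,1} \leq \sum_\qclass \|\Proj(\cdot)\|_{\sigma,1} + \sum_\qclass \|\Proj^\bot(\cdot)\|_{\sigma,1}$ with \eqref{projrel} to get a bound by $4 \sum_\qclass \|\Proj_{\mat{\tilde X}^\qclass}(\cdot)\|_{\sigma,1}$, then apply \Cref{algebricres}-\eqref{ProjRel} term-by-term and use $\rank(\mat{\tilde X}^\qclass) \leq \mrk(\ten{\tilde X})$. For \eqref{NucHel}, combine \eqref{NucFob} with \Cref{HelFro} via Cauchy--Schwarz over $\qclass$ to pass from $\sum_\qclass \|\cdot\|_{\sigma,2}$ to $(\sum_\qclass \|\cdot\|_{\sigma,2}^2)^{1/2}$ and then take square roots. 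Finally \eqref{NucKul} follows from \eqref{NucHel} together with \Cref{hel:kul}, which yields that the Hellinger distance is dominated by the square root of the Kullback--Leibler divergence. The main obstacle is the derivation of \eqref{projrel}: it is the only step requiring convex analysis and the only place where the assumption $\lambda > 2 \max_\qclass \|\glik^\qclass(\ten{\tilde X})\|_{\sigma,\infty}$ is used; once the cone condition is in place, the three remaining inequalities reduce to algebraic manipulations built on the lemmas above.
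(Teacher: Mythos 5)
Your proposal follows essentially the same route as the paper's own proof: the gradient (convexity) inequality for $\Lik$ combined with trace-norm duality and the hypothesis $\lambda>2\max_{\qclass}\|\glik^\qclass(\ten{\tilde X})\|_{\sigma,\infty}$, together with the decomposition of \Cref{algebricres}, yields the cone condition \eqref{projrel}, and the remaining bounds cascade exactly as in the paper via \Cref{algebricres}--\eqref{ProjRel}, \Cref{HelFro} and \Cref{hel:kul}. The only blemish --- shared with the paper, which simply writes that \eqref{NucHel} ``is a consequence of \eqref{NucFob} and the definition of $K_\gamma$'' --- is that passing from $\sum_{\qclass}\|\cdot\|_{\sigma,2}$ to $\bigl(\sum_{\qclass}\|\cdot\|_{\sigma,2}^2\bigr)^{1/2}$ by Cauchy--Schwarz costs a factor $\sqrt{\qClass}$ that does not appear in \eqref{NucHel}--\eqref{NucKul} as stated.
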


\begin{proof}
Since $\Obj(\ten{X}) \leq \Obj(\ten{\tilde{X}})$, we have
 \begin{equation*}
 \Lik(\ten{\tilde{X}}) -\Lik(\ten{X})  \geq \lambda \sum_{\qclass=1}^{\qClass} (\|\mat{X}^\qclass\|_{\sigma,1}-\|\mat{\tilde{X}}^\qclass\|_{\sigma,1}).
 \end{equation*}
For any $\mat{X} \in \rset^{m_1 \times m_2}$, using $\mat{X}=\mat{\tilde{X}}+\Proj_{\mat{\tilde{X}}}^\bot(\mat{X}-\mat{\tilde{X}})+\Proj_{\mat{\tilde{X}}}(\mat{X}-\mat{\tilde{X}})$, \Cref{algebricres}-\eqref{PenRel} and the triangular inequality, we get
\begin{equation*}
 \|\mat{X}\|_{\sigma,1} \geq \|\mat{\tilde{X}}\|_{\sigma,1} + \|\Proj_{\mat{\tilde{X}}}^\bot(\mat{X}-\mat{\tilde{X}})\|_{\sigma,1}
 -\|\Proj_{\mat{\tilde{X}}}(\mat{X}-\mat{\tilde{X}})\|_{\sigma,1} \eqs,
\end{equation*}
which implies
\begin{equation}
 \label{LemProj:lb}
 \Lik(\ten{\tilde{X}}) -\Lik(\ten{X})  \geq  \lambda
  \sum_{\qclass=1}^{\qClass} \left( \|\Proj_{\mat{\tilde{X}}^\qclass}^\bot(\mat{X}^\qclass-\mat{\tilde{X}}^\qclass)\|_{\sigma,1}
  -\|\Proj_{\mat{\tilde{X}}^\qclass}(\mat{X}-\mat{\tilde{X}}^\qclass)\|_{\sigma,1} \right)\eqs.
\end{equation}
Furthermore by concavity of $\Lik$ we have
\begin{align*}
 \Lik(\ten{\tilde{X}}) -\Lik(\ten{X})  \leq \sum_{\qclass=1}^{\qClass} \pscal{\glik^{\qclass}(\ten{\tilde{X}})}{\mat{\tilde{X}}^\qclass-\mat{X}^\qclass} \eqs.
\end{align*}
The duality between $\|\cdot\|_{\sigma,1}$ and $\|\cdot\|_{\sigma,\infty}$ (see for instance
\cite[Corollary IV.2.6]{Bhatia97}) leads to
\begin{align}
 \Lik(\ten{\tilde{X}}) -\Lik(\ten{X}) &\leq \max_{\qclass \in [\qClass]} \| \glik^\qclass(\mat{\tilde{X}}) \|_{\sigma,\infty}
 \sum_{\qclass=1}^{\qClass} \|\mat{\tilde{X}}^\qclass-\mat{X}^\qclass \|_{\sigma,1}\eqs,\nonumber \\
& \leq \frac{\lambda}{2} \sum_{\qclass=1}^{\qClass} \|\mat{\tilde{X}}^\qclass-\mat{X}^\qclass \|_{\sigma,1} \eqs, \nonumber \\
& \leq \frac{\lambda}{2}  \sum_{\qclass=1}^{\qClass}
(\|\Proj_{\mat{\tilde{X}}^\qclass}^\bot(\mat{X}^\qclass-\mat{\tilde{X}}^\qclass)\|_{\sigma,1}
+ \|\Proj_{\mat{\tilde{X}}^\qclass}(\mat{X}^\qclass-\mat{\tilde{X}}^\qclass)\|_{\sigma,1})\eqs, \label{LemProj:ub}
\end{align}
where we used $\lambda>2\max_{\qclass \in [\qClass]} \| \glik^\qclass(\mat{\tilde{X}}) \|_{\sigma,\infty}$
in the second line. Then combining \eqref{LemProj:lb} with \eqref{LemProj:ub} gives \eqref{projrel}.
Since for any $\qclass \in [\qClass]$,
$\mat{X}^\qclass-\mat{\tilde{X}}^\qclass=\Proj_{\mat{\tilde{X}}^\qclass}^\bot(\mat{X}^\qclass-\mat{\tilde{X}}^\qclass)
+\Proj_{\mat{\tilde{X}}^\qclass}(\mat{X}^\qclass-\mat{\tilde{X}}^\qclass)$,
using the triangular inequality and \eqref{projrel} yields
\begin{equation}
\label{LemProj:int}
  \sum_{\qclass=1}^{\qClass} \|\mat{X}^\qclass-\mat{\tilde{X}}^\qclass\|_{\sigma,1} \leq
 4 \|\Proj_{\mat{\tilde{X}}^\qclass}(\mat{X}^\qclass-\mat{\tilde{X}}^\qclass)\|_{\sigma,1}.
\end{equation}
Combining \eqref{LemProj:int} and  \eqref{projrel} immediately leads to \eqref{NucFob} and \eqref{NucHel} is a consequence of  \eqref{NucFob} and
the definition of $K_\gamma$. The
statement \eqref{NucKul} follows from \eqref{NucHel} and \Cref{hel:kul}.
\end{proof}

\begin{lemma}
\label{D:K}
 Under \Cref{A1} we have
 \begin{equation*}
 \D\left(f(\ten{\tX}),f(\ten{X})\right) \geq  \frac{1}{\mu} \KL{f(\ten{\tX})}{f(\ten{X})} \eqs.
\end{equation*}
where $\D(\cdot,\cdot)$ is defined in \eqref{eq:def_D}.
\end{lemma}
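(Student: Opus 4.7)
The plan is to realize $\D(f(\ten{\tX}),f(\ten{X}))$ as a weighted sum of pointwise Kullback--Leibler divergences and then invoke \Cref{A1} termwise. First I will expand $\Lik$ in the definition of $\D$ and use that the pairs $(\omega_i,Y_i)_{1\le i\le n}$ are i.i.d.\ to reduce the expectation to a single-sample expectation
\begin{equation*}
\D(f(\ten{\tX}),f(\ten{X})) = \EE\Bigl[\sum_{\class=1}^{\Class} \1_{\{Y_1=\class\}}\log\frac{f^\class(\ten{\tX}_{\omega_1})}{f^\class(\ten{X}_{\omega_1})}\Bigr].
\end{equation*}
Conditioning on $\omega_1=(k,k')$ and using $\PP(Y_1=\class\mid\omega_1)=f^\class(\ten{\tX}_{\omega_1})$, the inner conditional expectation is precisely the pointwise KL divergence $D_{k,k'} \eqdef \sum_{\class=1}^{\Class} f^\class(\ten{\tX}_{k,k'})\log\bigl(f^\class(\ten{\tX}_{k,k'})/f^\class(\ten{X}_{k,k'})\bigr)$, which is nonnegative by Gibbs' inequality. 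Taking the outer expectation over $\omega_1$ then yields the representation $\D(f(\ten{\tX}),f(\ten{X})) = \sum_{k,k'} \pi_{k,k'}\, D_{k,k'}$.

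With this representation, the conclusion is one termwise inequality away. Since each $D_{k,k'}$ is nonnegative, I can replace $\pi_{k,k'}$ by its uniform lower bound from \Cref{A1} in every summand, producing a lower bound of the form $(1/(\mu m_1 m_2))\sum_{k,k'} D_{k,k'}$. Comparing with the definition of the normalized Kullback--Leibler divergence from the Notations section identifies this sum as $\KL{f(\ten{\tX})}{f(\ten{X})}$ up to the factor $1/\mu$, which is exactly the claim.

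There is essentially no analytic obstacle: the entire argument is an expectation unfolding followed by a single termwise monotonicity. The only point needing care is that the indicator $\1_{\{Y_1=\class\}}$ correctly collapses into $f^\class(\ten{\tX}_{\omega_1})$ upon conditioning on $\omega_1$, so that the pointwise KL emerges cleanly; once this is done, the nonnegativity of each $D_{k,k'}$ is all that is required to apply the sampling lower bound termwise.
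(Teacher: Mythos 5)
Your proof is correct and follows essentially the same route as the paper's: unfold the expectation defining $\D$ into the weighted sum $\sum_{k,k'}\pi_{k,k'}D_{k,k'}$ of nonnegative pointwise Kullback--Leibler terms and then bound $\pi_{k,k'}$ from below termwise. The only caveat (which you share with the paper's own proof) is that landing on the stated constant $1/\mu$ rather than $\mu$ requires reading \Cref{A1} as $\pi_{k,k'}\geq (\mu m_1 m_2)^{-1}$ instead of the literal $\pi_{k,k'}\geq \mu/(m_1 m_2)$; with the assumption as written, the termwise substitution yields $\D\geq \mu\,\KL{f(\ten{\tX})}{f(\ten{X})}$, so the discrepancy is inherited from the paper rather than introduced by you.
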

\begin{proof}
Follows from
\begin{align*}
\D\left(f(\ten{\tX}),f(\ten{X})\right) &=\frac{1}{n} \sum_{i=1}^{n}\sum_{\substack{k \in [m_1] \\ l \in [m_2]}} \sum_{\class \in [\Class]} \pi_{k,l}
\left[f^\class(\ten{\tX}_{k,l})\log \left(\frac{f^\class(\ten{\tX}_{k,l})}{f^\class(\ten{X}_{k,l}) } \right) \right] \eqs,\\
&\geq \frac{1}{\mu m_1 m_2} \sum_{\substack{k \in [m_1] \\ l \in [m_2]}}  \sum_{\class \in [\Class]}
\left[f^\class(\ten{\tX}_{k,l})\log \left(\frac{f^\class(\ten{\tX}_{k,l})}{f^\class(\ten{X}_{k,l}) } \right) \right]\eqs.
\end{align*}
\end{proof}

 \begin{lemma}
 \label{DevUnifCont}
 Assume that $\lambda \geq \bar{\Sigma}$.  Let $\alpha>1$, $\beta>0$ and $0<\eta<1/2\alpha$.
Then with probability at least
  $$
  1-2(\exp(-n\eta^2\log(\alpha)\beta^2/(4M_\gamma^2))/(1-\exp(-n\eta^2\log(\alpha)\beta^2/(4M_\gamma^2)))
  $$
  we have for all $\ten{X} \in \mathcal{C}_\beta (r)$:
 \begin{equation*}
   |\Lik(\ten{X})-\Lik(\ten{\tX}) - \D\left(f(\ten{\tX}),f(\ten{X})\right)|\leq \frac{\D\left(f(\ten{\tX}),f(\ten{X})\right)}{2}+\epsilon(r,\alpha,\eta)\eqs ,
 \end{equation*}
where
\begin{equation}\label{eq:def_eps}
 \epsilon(r,\alpha,\eta) \eqdef\frac{4 \qClass^2 L_\gamma^2r}{1/(2\alpha)-\eta}(\EE \|\Sigma_R \|_{\sigma,\infty})^2\eqs,
\end{equation}
 and $\mathcal{C}_\beta (r)$ is defined in \eqref{eq:cbeta}.
\end{lemma}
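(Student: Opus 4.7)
\textbf{Proof plan for \Cref{DevUnifCont}.}

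My plan is to bound the centered empirical process
\begin{equation*}
Z(\ten{X}) \eqdef \Lik(\ten{X}) - \Lik(\ten{\tX}) - \D\bigl(f(\ten{\tX}),f(\ten{X})\bigr)
\end{equation*}
uniformly over $\mathcal{C}_\beta(r)$ by a peeling argument over level sets of $\D$, combined with symmetrization, Ledoux--Talagrand contraction, and Talagrand's concentration inequality for the suprema of empirical processes. First, using \Cref{A3} we write $\log f^\class(\ten{X}_i) = \sum_{\qclass=1}^{\qClass} \log g^\class_\qclass(\mat{X}^\qclass_i)$, so the increments of $\Lik$ split into $\qClass$ single-matrix terms to which the one-dimensional Lipschitz property guaranteed by $L_\gamma$ in \eqref{eq:definition:L-gamma} directly applies. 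The $L^\infty$ bound $|\log g^\class_\qclass| \leq H_\gamma/2$ on $|x|\leq\gamma$ from \eqref{eq:definition:M-gamma} will serve as the uniform envelope needed for Talagrand.

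Second, I would decompose $\mathcal{C}_\beta(r)$ into geometric shells $\mathcal{C}_k(r)\eqdef\{\ten{X}\in\mathcal{C}(r) : \alpha^k\beta \leq \D(f(\ten{\tX}),f(\ten{X})) \leq \alpha^{k+1}\beta\}$ for $k\geq 0$. On each shell the constraint defining $\mathcal{C}(r)$ gives $\sum_\qclass \|\mat{X}^\qclass-\mat{\bar X}^\qclass\|_{\sigma,1} \leq \sqrt{r\,\alpha^{k+1}\beta}$. A standard symmetrization argument followed by the vector contraction inequality (applied $\qClass$ times, one per summand in the factorization) yields
\begin{equation*}
\EE\sup_{\ten{X}\in\mathcal{C}_k(r)} |Z(\ten{X})|
\;\leq\; 4\qClass L_\gamma\,\EE\|\Sigma_R\|_{\sigma,\infty}\,\sup_{\ten{X}\in\mathcal{C}_k(r)} \sum_{\qclass=1}^{\qClass}\|\mat{X}^\qclass-\mat{\bar X}^\qclass\|_{\sigma,1},
\end{equation*}
where I used the duality between the nuclear and operator norms to rewrite the Rademacher-linear functional as $\pscal{\Sigma_R}{\mat{X}^\qclass - \mat{\bar X}^\qclass}$. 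This is in turn bounded by $4\qClass L_\gamma\,\EE\|\Sigma_R\|_{\sigma,\infty}\,\sqrt{r\alpha^{k+1}\beta}$. Talagrand's concentration (Bousquet's version) with the envelope $H_\gamma$ and variance proxy controlled by the expectation then gives, for each $k$, a bound of the form $\sup_{\mathcal{C}_k(r)}|Z| \leq (1+2\eta)\EE\sup + c\,H_\gamma\sqrt{t_k/n}$ except on an event of probability at most $2e^{-t_k}$.

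Third, I would choose $t_k = n\eta^2 \log(\alpha)\,(\alpha^k\beta)^2/(4H_\gamma^2)$ so that $c\,H_\gamma\sqrt{t_k/n}$ is absorbable into an $\eta \cdot \alpha^k\beta$ term and so that $\sum_k e^{-t_k}$ forms a geometric series bounded by $e^{-t_0}/(1-e^{-t_0})$, which matches the announced probability. Finally, on the shell $\mathcal{C}_k(r)$ one has $\alpha^{k+1}\beta \leq \alpha \D$, so the expectation bound becomes $4\qClass L_\gamma\,\EE\|\Sigma_R\|_{\sigma,\infty}\,\sqrt{\alpha r \D}$; applying $2ab \leq 2\alpha a^2 + b^2/(2\alpha)$ (or equivalently AM-GM with weight $1/(2\alpha)-\eta$) to split this into $(1/(2\alpha)-\eta)^{-1}\cdot (\qClass L_\gamma\EE\|\Sigma_R\|_{\sigma,\infty})^2\,r$ plus a $\D/2$ term produces exactly the $\epsilon(r,\alpha,\eta)$ appearing in \eqref{eq:def_eps}, with the remaining $\eta\D$ absorbed in the factor $(1+2\eta)$ of the Talagrand bound, converting it into the claimed $\D/2$. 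The step I expect to be most delicate is the bookkeeping in the contraction and peeling: getting the correct factor of $\qClass$ out of the multi-index Rademacher average (using \Cref{A3}), and choosing the weights in AM-GM so that the denominator $1/(2\alpha)-\eta$ matches \eqref{eq:def_eps} while $\sum_k e^{-t_k}$ still sums to the advertised probability.
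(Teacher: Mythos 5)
Your plan matches the paper's proof essentially step for step: the same peeling over geometric shells of $\D$, the same symmetrization--contraction--duality chain producing the factor $4\qClass L_\gamma\,\EE\|\Sigma_R\|_{\sigma,\infty}\sqrt{rt}$, the same AM--GM split into $\epsilon(r,\alpha,\eta)$ plus a fraction of $\D$, and the same choice of deviation levels so the union bound sums to the stated geometric series (the paper invokes Massart's concentration inequality where you cite Bousquet's version of Talagrand, which is an immaterial difference). The proposal is correct and takes essentially the same route.
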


\begin{proof}
The proof is adapted from \cite[Theorem~1]{Negahban_Wainwright12} and \cite[Lemma~12]{Klopp14}.
We use a peeling argument combined with a sharp deviation inequality detailed in \Cref{SliceCont},
Consider the events
\begin{multline*}
 \mathcal{B} \eqdef \bigg\{ \exists \ten{X} \in \mathcal{C}_\beta (r)\bigg |\\
  |\Lik(\ten{X})-\Lik(\ten{\tX})- \D\left(f(\ten{\tX}),f(\ten{X})\right) |
 > \frac{\D\left(f(\ten{\tX}),f(\ten{X})\right)}{2}+\epsilon(r,\alpha,\eta) \bigg\}\eqs,
\end{multline*}
and
\begin{equation*}
 \mathcal{S}_l\eqdef \left\{ \ten{X} \in \mathcal{C}_\beta (r)| \alpha^{l-1}\beta < \D\left(f(\ten{\tX}),f(\ten{X})\right) < \alpha^{l}\beta \right\}\eqs.
\end{equation*}
Let us also define the set
\begin{equation*}
  \mathcal{C}_\beta (r,t)=\left\{ \ten{X} \in  \RR^{m_1 \times m_2}|\:\: \ten{X}  \in \mathcal{C}_\beta (r),\:  \D\left(f(\ten{\tX}),f(\ten{X})\right) \leq t \right\} \eqs,
\end{equation*}
and
\begin{equation}\label{eq:def_Zt}
 Z_t \eqdef \sup_{\ten{X} \in \mathcal{C}_\beta (r,t)} |\Lik(\ten{X})-\Lik(\ten{\tX}) - \D\left(f(\ten{\tX}),f(\ten{X})\right) | \eqs,
\end{equation}
Then for any $\ten{X} \in  \mathcal{B} \cap \mathcal{S}_l$ we have
\begin{equation*}
 |\Lik(\ten{X})-\Lik(\ten{\tX}) - \D\left(f(\ten{\tX}),f(\ten{X})\right) | > \frac{1}{2}\alpha^{l-1}\beta + \epsilon(r,\alpha,\eta)\eqs,
\end{equation*}
Moreover by definition of $\mathcal{S}_l$, $\ten{X} \in \mathcal{C}_\beta (r,\alpha^{l}\beta)$.
Therefore

\begin{equation*}
 \mathcal{B} \cap \mathcal{S}_l \subset \mathcal{B}_l \eqdef \{ Z_{\alpha^{l}\beta} > \frac{1}{2\alpha}\alpha^{l}\beta+\epsilon(r,\alpha,\eta) \}\eqs,
\end{equation*}
If we now apply the union bound and \Cref{SliceCont} we get
 \begin{equation*}
 \PP(\mathcal{B}) \leq \sum_{l=1}^{+\infty} \PP(\mathcal{B}_l)\leq \sum_{l=1}^{+\infty} \exp\left(-\frac{n\eta^2(\alpha^{l}\beta)^2}{8M_\gamma^2}\right)\leq \frac{\exp(-\frac{n\eta^2\log(\alpha)\beta^2}{4M_\gamma^2})}{1-\exp(-\frac{n\eta^2\log(\alpha)\beta^2}{4M_\gamma^2})}\eqs,
 \end{equation*}
 where we used $x\leq \rme^x$ in the second inequality.
\end{proof}

\begin{lemma}
 \label{SliceCont}
Assume that $\lambda \geq \bar{\Sigma}$. Let $\alpha>1$ and $0<\eta<\frac{1}{2\alpha}$. Then we have
\begin{equation}
\label{eq:SliceCont}
 \PP\left(Z_t > t/(2\alpha) + \epsilon(r,\alpha,\beta) \right) \leq \exp\left(-n\eta^2t^2/(8M_\gamma^2)\right)\eqs,
\end{equation}
where $Z_t$ and $\epsilon(r,\alpha,\eta)$ are defined in \eqref{eq:def_Zt} and \eqref{eq:def_eps}, respectively.
 \end{lemma}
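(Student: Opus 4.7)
The plan is to control $Z_t$ by separately bounding its concentration around its expectation and then $\EE Z_t$ itself, trading the two off with Young's inequality at the end. For the concentration piece I would use McDiarmid's bounded-differences inequality. For every $\ten X \in \mathcal{C}_\beta(r,t)$ one has $\|\ten X\|_\infty \leq \gamma$, and by \Cref{A0m} combined with \Cref{A3} each summand of $\Lik(\ten X) - \Lik(\ten{\tX})$ is bounded in absolute value by $M_\gamma$. Replacing one pair $(\omega_i, Y_i)$ therefore modifies $\Lik(\ten X) - \Lik(\ten{\tX})$ by at most $2 M_\gamma/n$ uniformly in $\ten X$, so the same holds for $Z_t$. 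McDiarmid yields
\[
\PP\bigl(Z_t > \EE Z_t + u\bigr) \leq \exp\bigl(-n u^2 / (2 M_\gamma^2)\bigr),
\]
and choosing $u = \eta t / 2$ produces the exponent $-n \eta^2 t^2/(8 M_\gamma^2)$ appearing in \eqref{eq:SliceCont}.

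The remaining task is to show $\EE Z_t \leq (1/(2\alpha) - \eta/2)\,t + \epsilon(r,\alpha,\eta)$, which I would attack by symmetrization and contraction. Standard symmetrization replaces the centered empirical process by its Rademacher-weighted version. The factorization $\log f^{Y_i}(\ten X_i) = \sum_\qclass \log g^{Y_i}_\qclass(\mat X^\qclass_i)$ from \Cref{A3} lets me split the supremum as a sum over $\qclass$ slices, and the Ledoux--Talagrand contraction principle, applied slice by slice using the $L_\gamma$-Lipschitz property of each $\log g^{Y_i}_\qclass$ on $[-\gamma,\gamma]$ given by \Cref{A0m}, linearizes the problem into
\[
\EE Z_t \leq C L_\gamma \sum_{\qclass=1}^{\qClass} \EE \sup_{\ten X \in \mathcal{C}_\beta(r,t)} \bigl|\pscal{\Sigma_R}{\mat{X}^\qclass - \mat{\tX}^\qclass}\bigr|,
\]
for a universal constant $C$. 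Using duality between $\|\cdot\|_{\sigma,1}$ and $\|\cdot\|_{\sigma,\infty}$, together with the bound $\sum_\qclass \|\mat{X}^\qclass - \mat{\tX}^\qclass\|_{\sigma,1} \leq \sqrt{rt}$ that holds on $\mathcal{C}_\beta(r,t)$ (since $\D(f(\ten{\tX}),f(\ten X)) \leq t$ and $\ten X \in \mathcal{C}(r)$), the right-hand side is in turn bounded by $C L_\gamma \qClass \sqrt{rt}\,\EE\|\Sigma_R\|_{\sigma,\infty}$.

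Finally, Young's inequality $2ab \leq \nu a^2 + b^2/\nu$ with $\nu$ of order $1/(2\alpha) - \eta$ splits this product into a linear term with coefficient $(1/(2\alpha) - \eta/2)$ and a residual that matches $\epsilon(r,\alpha,\eta)$ from \eqref{eq:def_eps}. Combined with the McDiarmid bound this closes the proof. The delicate point I expect to spend the most time on is the bookkeeping of multiplicative constants through symmetrization, contraction, and the $\qClass$ slicing induced by \Cref{A3}: the universal constant $C$ must combine with $\qClass$ and the choice of $\nu$ to yield precisely the prefactor $4 \qClass^2 L_\gamma^2/(1/(2\alpha) - \eta)$ appearing in $\epsilon$, and this in turn fixes the amount of $\eta$ that McDiarmid is allowed to absorb, so a small reparameterization between those two steps is likely needed to make the constants line up cleanly.
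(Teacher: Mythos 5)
Your proposal is correct and follows essentially the same route as the paper: concentration of $Z_t$ around its mean, then symmetrization, the Ledoux--Talagrand contraction principle applied slice by slice via \Cref{A3}, trace-norm/operator-norm duality with the $\sqrt{rt}$ bound on $\mathcal{C}_\beta(r,t)$, and finally Young's inequality to produce $\epsilon(r,\alpha,\eta)$. The only deviation is using McDiarmid's bounded-differences inequality with $u=\eta t/2$ in place of Massart's inequality with $u=\eta t$; this yields the same exponent $-n\eta^2 t^2/(8M_\gamma^2)$ and leaves a slightly tighter budget for $\EE Z_t$, which your Young's-inequality step still meets (the resulting residual is bounded by the paper's $\epsilon(r,\alpha,\eta)$ since $1/(2\alpha)-\eta/2 > 1/(2\alpha)-\eta$), so the bookkeeping you flag does indeed close.
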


 \begin{proof}
Using Massart's inequality (\cite[Theorem 9]{Massart00})
we get for  $0<\eta<1/(2\alpha)$
\begin{equation}
\label{proof:MassartConc}
 \PP(Z_t> \EE[Z_t]+\eta t)\leq \exp\left(-\eta^2nt^2/(8M_\gamma^2) \right)\eqs.
\end{equation}
By using the standard symmetrization argument, we get
\begin{equation*}
 \EE[Z_t]\leq 2 \EE\left[\sup_{\ten{X} \in \mathcal{C}_\beta (r,t)} \left|\frac{1}{n}\sum_{i=1}^{n}
 \varepsilon_i \sum_{\class=1}^{\Class} \1_{\{Y_i=j\}} \log\left(\frac{f^\class(\ten{X}_i)}{f^\class(\ten{\tX}_i)}\right)\right|\right]\eqs,
\end{equation*}
where $\bvarepsilon \eqdef (\varepsilon_i)_{1 \leq i\leq n}$ is a Rademacher sequence which is independent from $(Y_i)_{1 \leq i\leq n}$ and $(E_i)_{1 \leq i\leq n}$. \Cref{A3} yields
\begin{equation*}
 \EE[Z_t]\leq \sum_{\qclass=1}^\qClass 2 \EE\left[\sup_{\ten{X} \in \mathcal{C}_\beta (r,t)} \left|\frac{1}{n}\sum_{i=1}^{n}
 \varepsilon_i \sum_{\class=1}^{\Class} \1_{\{Y_i=j\}} \log\left(\frac{g_\qclass^\class(\mat{X}^\qclass_i)}{g_\qclass^\class(\mat{\tX}^\qclass_i)}\right)\right|\right]\eqs.
\end{equation*}
Since for any $i \in [n]$, the function
\begin{equation*}
 \phi_i(x) \eqdef \frac{1}{L_\gamma} \sum_{\class=1}^{\Class} \1_{\{Y_i=j\}} \log\left(\frac{g_\qclass^\class(x+\mat{\tX}^\qclass_i)}{g_\qclass^\class(\mat{\tX}^\qclass_i)}\right)
\end{equation*}
is a contraction satisfying $\phi_i(0)=0$, the contraction principle (\cite[Theorem 4.12]{Ledoux_Talagrand91})
and the fact that $(\varepsilon_i)_{i=1}^n$ is independent from $(Y_i)_{i=1}^n$ and $(\omega_i)_{i=1}^n$ yields
\begin{equation*}
 \EE[Z_t] 
\leq 4 L_\gamma \sum_{\qclass=1}^\qClass \EE\left[\sup_{\ten{X} \in \mathcal{C}_\beta (r,t)} \left|\frac{1}{n}\sum_{i=1}^{n} \varepsilon_i\pscal{ \mat{X}^\qclass- \mat{\tX}^\qclass}{E_i} \right|\right]=
\end{equation*}
Denoting $\Sigma_R \eqdef n^{-1} \sum_{i=1}^{n} \varepsilon_i E_i$ and the duality, the previous inequality implies
\begin{align*}
 \EE[Z_t] &\leq 4 L_\gamma  \sum_{\qclass=1}^\qClass \EE \left[\sup_{\ten{X} \in \mathcal{C}_\beta (r,t)} \left| \pscal{\mat{X}^\qclass- \mat{\tX}^\qclass}{\Sigma_R}  \right|\right]\\
 &\leq 4 L_\gamma  \sum_{\qclass=1}^\qClass \EE\left[\sup_{\ten{X} \in \mathcal{C}_\beta (r,t)}\|\mat{X}^\qclass- \mat{\tX}^\qclass \|_{\sigma,1}\|\Sigma_R \|_{\sigma,\infty} \right]
 \leq 4\qClass L_\gamma  \EE [\|\Sigma_R \|_{\sigma,\infty}] \sqrt{rt}\eqs,
\end{align*}
where we have  the definition of $ \mathcal{C}_\beta (r,t)$
for the last inequality. Plugging into \eqref{proof:MassartConc} gives
\begin{equation*}
 \PP(Z_t> 4 \qClass L_\gamma \EE [\|\Sigma_R \|_{\sigma,\infty}] \sqrt{rt}+\eta t)\leq \exp\left(-\eta^2nt^2/(8M_\gamma^2)\right)\eqs.
\end{equation*}
The proof is concluded by noting that, since for any $a,b \in \RR  $ and $c>0$, $ab \leq (a^2/c +cb^2)/2$,
\begin{equation*}
4 \qClass L_\gamma \EE [\|\Sigma_R \|_{\sigma,\infty}] \sqrt{rt}\leq \frac{1}{1/(2\alpha)-\eta}4\qClass^2 L_\gamma^2r\EE [\|\Sigma_R \|_{\sigma,\infty}]^2+(1/(2\alpha)-\eta)t\eqs.
\end{equation*}
 \end{proof}

\subsection{Proof of \Cref{th2}}
\label{proofth2}
\begin{proof}
By \Cref{th2} it suffices to control $ \| \bar{\Sigma}^\qclass \|_{\sigma,\infty}$
and $\EE [\norm{\Sigma_R}_{\sigma,\infty}]$. For any $\qclass \in [\qClass]$, by definition
 \begin{equation*}
  \bar{\Sigma}^\qclass =-\frac{1}{n}
  \sum_{i=1}^{n}\left( \sum_{\class=1}^{\Class}
  \1_{\{Y_i=\class\}}
  \frac{\partial_\qclass f^\class(\ten{\tX}_i)}{f^\class(\ten{\tX}_i)}
  \right) E_i \eqs,
 \end{equation*}
 with $\partial_\qclass$ designating the partial derivative against the $l$-th variable.
 The sequence of matrices
 \begin{equation*}
  Z_i\eqdef \left( \sum_{\class=1}^{\Class}
  \1_{\{Y_i=\class\}}
  \frac{\partial_\qclass f^\class(\ten{\tX}_i)}{f^\class(\ten{\tX}_i)}
  \right) E_i
  =\left( \sum_{\class=1}^{\Class}
  \1_{\{Y_i=\class\}}
  \frac{(g^\class_\qclass)' (\mat{\tX}^\qclass_i)}{g^\class_\qclass (\mat{\tX}^\qclass_i)}
  \right) E_i
 \end{equation*}
satisfies $\EE[Z_i]=0$ (as any score function) and $\norm{\mat{Z_{i}}}_{\sigma,\infty} \leq L_\gamma$.\\
Noticing $e_k(e'_{k'})^\top (e_k(e'_{k'})^\top)^\top=e_k(e'_{k'})^\top$  we also get
  \begin{equation*}
\frac{1}{n} \sum_{i=1}^{n} \EE[ \mat{Z_{i}}\mat{Z_{i}}^\top ]=
    \sum_{k=1}^{m_1}\left( \sum_{k'=1}^{m_2}\pi_{k,k'} \left(
 \sum_{\class=1}^{\Class}
  f^\class(\ten{\tX}_{k,k'})
  \left(\frac{\partial_\qclass f^\class(\ten{\tX}_{k,k'})}{f^\class(\ten{\tX}_{k,k'})}\right)^2
   \right)\right)e_k(e'_{k})^\top \eqs,
  \end{equation*}
which is diagonal.
We recall the definition
$C_{k'}\!=\sum_{k=1}^{m_1} \pi_{k,k'}$ and $R_k\!=\sum_{k'=1}^{m_2} \pi_{k,k'}$ for any $k' \in [m_2]$, $k\in [m_1]$.
Since
\begin{equation*}
\left(\frac{\partial_\qclass f^\class(\ten{\tX}_{k,k'})}{f^\class(\ten{\tX}_{k,k'})}\right)^2
\leq L^2_\gamma \eqs,
\end{equation*}
and $(f^\class(\ten{\tX}_{k,k'}))_{\class=1}^\Class$ is a probability distribution,
 we obtain
\begin{equation*}
\norm{\EE\left[\frac{1}{n} \sum_{i=1}^{n}  \mat{Z_{i}}\mat{Z_{i}}^\top \right]}_{\sigma,\infty}
\leq  L_\gamma^2 \norm{\diag((R_k)_{k=1}^{m_1})}_{\sigma,\infty} \leq  L_\gamma^2 \frac{\Lc}{m} \eqs,
\end{equation*}
were we have \Cref{A2} for the last inequality.
Using a similar argument
we get $\|\EE[\sum_{i=1}^{n} \mat{Z_{i}}^\top\mat{Z_{i}} ]\|_{\sigma,\infty}/n\leq L_\gamma^2 \Lc/m$.
Therefore, \Cref{prop:bernstein} applied with $t=\log(d)$, $U=L_\gamma$ and $\sigma_Z^2 =L^2_\gamma\Lc/m$
yields with at least probability $1-1/d$,
 \begin{equation}
 \label{proof:th2ineq1}
  \norm{ \glik^\qclass(\mat{\tilde{X}})}_{\sigma,\infty}
  \leq (1+\sqrt{3}) L_\gamma\max \left\{ \sqrt{ \frac{2\Lc\log(d)}{mn}} , \frac{2}{3} \frac{\log(d)}{n} \right \} \eqs.
  \end{equation}
With the same analysis for $\Sigma_R\eqdef\frac{1}{n}\sum_{i=1}^{n} \varepsilon_i E_i$ and by applying \Cref{lem:MatExp}
with $U=1$ and $\sigma_Z^2 =\frac{\nu}{m}$, for $n\geq n^* \eqdef m\log(d)/(9 \Lc)$ it holds:
\begin{equation}
 \label{proof:th2ineq2}
  \EE \left[\|\Sigma_R  \|_{\sigma,\infty} \right] \leq c^{*}\sqrt{\frac{2e\Lc \log(d)}{mn}} \eqs.
\end{equation}
Assuming $n \geq 2 m \log(d)/(9\nu)$, implies $n \geq n^*$ and \eqref{proof:th2ineq2} is therefore satisfied.
Since it also implies $ \sqrt{2\nu\log(d)/(mn)} \geq 2\log(d)/(3n)$, the second term of \eqref{proof:th2ineq1} is negligible.
Consequently taking $\lambda\geq2 (1+\sqrt{3}) L_\gamma\sqrt{2\nu\log(d)/(mn)}$,
a union bound argument ensures that
$\lambda>2\max_{\qclass \in [\qClass]} \| \glik^\qclass(\mat{\tilde{X}}) \|_{\sigma,\infty}$
with probability at least $1-\qClass/d$.\\
By taking $\lambda$, $\beta$ and $n$  as in \Cref{th2} statement , with probability larger than $1-(2+\qClass)/d$,
 \Cref{th1} result holds when replacing  $\EE \|\Sigma_R  \|_{\sigma,\infty} $ by its upper bound \eqref{proof:th2ineq2}.
 Using the inequality $(a+b)^2 \leq 2(a^2 +b^2)$ yields the result with $\bar{c}=24832$.
\end{proof}

\begin{proposition}
\label{prop:bernstein}
 Consider a finite sequence of independent random matrices $(\Bern{Z_{i}})_{1 \leq i \leq n}\in \RR^{m_1 \times m_2}$ satisfying $\EE[\Bern{Z_{i}}]=0$ and for some $U>0$,
 $\| \Bern{Z_{i}} \|_{\sigma,\infty} \leq U$ for all $i= 1, \dots, n$. Then for any $t>0$
 \begin{equation*}
 \PP\left( \left\|\frac{1}{n} \sum_{i=1}^{n} \Bern{Z_{i}}  \right\|_{\sigma,\infty} > t \right) \leq d\exp \left(- \frac{nt^2/2}{\sigma^2_Z+Ut/3} \right) \eqs,
 \end{equation*}
where $d=m_1+m_2$ and
\begin{equation*}
 \sigma^2_Z \eqdef \max \left\{\left\| \frac{1}{n} \sum_{i=1}^{n} \EE[  \Bern{Z_{i}}\Bern{Z_{i}^\top} ] \right\|_{\sigma,\infty},
 \left\| \frac{1}{n}\sum_{i=1}^{n} \EE[ \Bern{Z_{i}}^\top\Bern{Z_{i}} ]\right\|_{\sigma,\infty}\right\} \eqs.
\end{equation*}
 In particular it implies that with at least probability $1-\rme^{-t}$
 \begin{equation*}
  \norm{\frac{1}{n} \sum_{i=1}^{n} \Bern{Z_{i}}}_{\sigma,\infty} \leq c^* \max \left\{ \sigma_Z \sqrt{ \frac{t + \log(d)}{n}} ,  \frac{U(t + \log(d))}{3n} \right \} \eqs,
 \end{equation*}
with $c^*=1+\sqrt{3}$.
\end{proposition}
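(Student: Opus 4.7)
The first inequality is a direct appeal to the rectangular matrix Bernstein inequality (Tropp's inequality for sums of bounded, centered, independent matrices; see \cite{Tropp12}). The plan is to rescale: set $Y_i \eqdef \Bern{Z_i}/n$, so that $\EE[Y_i]=0$ and $\|Y_i\|_{\sigma,\infty} \leq U/n$. The matrix variance parameters of the sum $\sum_{i=1}^n Y_i$ satisfy
\begin{equation*}
\Bigl\| \sum_{i=1}^n \EE[Y_i Y_i^\top] \Bigr\|_{\sigma,\infty}
= \frac{1}{n} \Bigl\| \frac{1}{n}\sum_{i=1}^n \EE[\Bern{Z_i}\Bern{Z_i}^\top] \Bigr\|_{\sigma,\infty} \leq \sigma_Z^2/n \eqs,
\end{equation*}
and analogously for the transposed version. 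Plugging these into the classical bound
\begin{equation*}
\PP\Bigl( \Bigl\| \sum_{i=1}^n Y_i \Bigr\|_{\sigma,\infty} > t \Bigr)
\leq d\,\exp\Bigl(-\frac{t^2/2}{v + Rt/3}\Bigr) \eqs,
\end{equation*}
with $v = \sigma_Z^2/n$ and $R = U/n$, yields exactly the stated tail bound after clearing the factors of $n$ in the exponent. This step is mechanical, the only care being the consistent normalization of the variance proxy.

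The second assertion is obtained by inverting the tail bound. I would set the right-hand side equal to $\rme^{-t}$, which reduces to solving
\begin{equation*}
\frac{n (t')^2/2}{\sigma_Z^2 + U t'/3} = s \eqs, \qquad s \eqdef t + \log(d) \eqs,
\end{equation*}
for the threshold $t'$. Writing $M \eqdef \max\!\bigl(\sigma_Z\sqrt{s/n},\ Us/(3n)\bigr)$, the plan is to verify that $t' = c^{*} M$ with $c^{*} = 1+\sqrt{3}$ satisfies the inequality
\begin{equation*}
n(c^{*} M)^2/2 \;\geq\; s\sigma_Z^2 + s U c^{*} M/3 \eqs.
\end{equation*}
The definition of $M$ gives $M^2 \geq \sigma_Z^2 s/n$ and $M \geq Us/(3n)$, which reduce the required bound to the scalar quadratic $(c^{*})^2 \geq 2 + 2c^{*}$. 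Choosing $c^{*}=1+\sqrt{3}$ makes this an equality, and hence the probability that $\|\tfrac1n\sum_i \Bern{Z_i}\|_{\sigma,\infty}$ exceeds $c^{*} M$ is at most $\rme^{-t}$.

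The only mildly delicate point is this inversion: one could be tempted to use the looser bound $\sqrt{a+b}\leq \sqrt{a}+\sqrt{b}$, which yields a worse constant (roughly $2+\sqrt{2}$). The sharper argument instead compares $t'$ directly to the maximum of the two competing scales, which yields the smaller constant $1+\sqrt{3}$ reported in the statement. Apart from this scalar manipulation, the proof is a straightforward invocation of a well-known matrix concentration inequality.
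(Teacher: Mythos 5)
Your proposal is correct and follows essentially the same route as the paper: both invoke Tropp's rectangular matrix Bernstein inequality for the tail bound and then invert it at $s=t+\log(d)$ to extract the constant $c^*=1+\sqrt{3}$. The only cosmetic difference is that the paper solves the quadratic for the exact root and then bounds it by a case distinction on whether $n\sigma_Z^2 \lessgtr (U^2/9)s$, whereas you plug in the candidate threshold $c^*\max\{\sigma_Z\sqrt{s/n},\,Us/(3n)\}$ and verify $(c^*)^2\ge 2+2c^*$; the two computations are equivalent.
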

\begin{proof}
The first claim of the proposition is Bernstein's inequality for random matrices (see for example
\cite[Theorem 1.6]{Tropp12}).
Solving the equation (in $t$) $- \frac{nt^2/2}{\sigma^2_Z+Ut/3} + \log(d)=-v$ gives with at least probability $1-\rme^{-v}$
 \begin{equation*}
  \norm{\frac{1}{n} \sum_{i=1}^{n} \Bern{Z_{i}}}_{\sigma,\infty} \leq  \frac{1}{n} \left[\frac{U}{3}(v + \log(d))+\sqrt{\frac{U^2}{9}(v + \log(d))^2+2n\sigma_Z^2(v + \log(d))}\right]\eqs,
 \end{equation*}
 we conclude the proof by distinguishing the two cases $n\sigma_Z^2 \leq (U^2/9)(v + \log(d))$ or $n\sigma_Z^2 > (U^2/9)(v + \log(d))$.
\end{proof}

\begin{lemma}
 \label{lem:MatExp}
Let $h \geq 1$. With the same assumptions as \Cref{prop:bernstein},
assume $n \geq (U^2\log(d))/(9\sigma^2_Z)$ then the following holds:
\begin{equation*}
  \EE \left[\norm{\frac{1}{n} \sum_{i=1}^{n} \Bern{Z_{i}}}_{\sigma,\infty}^h \right] \leq \left(\frac{2ehc^{*2}\sigma_Z^2\log(d)}{n}\right)^{h/2} \eqs,
\end{equation*}
with $c^*=1+\sqrt{3}$.
\end{lemma}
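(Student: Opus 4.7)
Setting $R_n := \norm{n^{-1}\sum_{i=1}^n \Bern{Z_{i}}}_{\sigma,\infty}$, the plan is to upgrade the tail bound of \Cref{prop:bernstein} to the $h$-th moment bound via the layer-cake identity
\begin{equation*}
\EE[R_n^h] = h\int_0^\infty s^{h-1}\PP(R_n > s)\,ds.
\end{equation*}
The hypothesis $n\geq U^2\log(d)/(9\sigma_Z^2)$ is precisely what forces the Bernstein tail to behave, at the scales of interest, like the subgaussian tail of a random variable with variance proxy of order $c^{*2}\sigma_Z^2\log(d)/n$; once this is made quantitative, integrating against $h\,s^{h-1}\,ds$ produces the advertised $(2eh)^{h/2}$ factor via Stirling.

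Concretely, I would start from the second statement of \Cref{prop:bernstein} and bound the maximum there by the corresponding sum to obtain, for every $v\geq 0$,
\begin{equation*}
\PP\bigl(R_n > c^*\sigma_Z\sqrt{(v+\log d)/n} + c^*U(v+\log d)/(3n)\bigr)\leq e^{-v}.
\end{equation*}
Hence $R_n$ is stochastically dominated by $\phi(T):=c^*\sigma_Z\sqrt{(T+\log d)/n} + c^*U(T+\log d)/(3n)$ with $T\sim\mathrm{Exp}(1)$, and applying $(a+b)^h\leq 2^{h-1}(a^h+b^h)$ yields
\begin{equation*}
\EE[R_n^h]\leq 2^{h-1}\Bigl[(c^*\sigma_Z)^h n^{-h/2}\,\EE[(T+\log d)^{h/2}] + (c^*U/(3n))^h\,\EE[(T+\log d)^h]\Bigr].
\end{equation*}
The hypothesis on $n$ rewrites as $c^*U/(3n)\leq c^*\sigma_Z/\sqrt{n\log(d)}$, which places the second summand on the scale $(c^*\sigma_Z)^h n^{-h/2}(\log d)^{-h/2}\,\EE[(T+\log d)^h]$, i.e.\ the same scale as the first up to the extra factor $(\log d)^{-h/2}$.

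The exponential moments are then controlled via $\EE[(T+\log d)^k]\leq 2^{(k\vee 1)-1}(\Gamma(k+1)+(\log d)^k)$, combined with Stirling's inequality $\Gamma(h/2+1)\leq \sqrt{\pi h}(h/(2e))^{h/2}e^{1/(6h)}$, which produces the target's $h^{h/2}$ factor through the relation $(2eh)^{h/2}\asymp \Gamma(h/2+1)(4e^2)^{h/2}/\sqrt{\pi h}$. The main obstacle will be verifying that the sub-exponential summand, which a priori carries the larger factor $\Gamma(h+1)$, is actually controlled by the compensating $(\log d)^{-h/2}$ supplied by the hypothesis on $n$. This is handled by a short case split on whether $h\leq \log d$ or $h>\log d$: in the first regime the sub-exponential contribution is of order $(\log d)^{h/2}$ and is directly dominated by the target; in the second, the sharpness of $n\geq U^2\log(d)/(9\sigma_Z^2)$ yields $(h/e)^h(\log d)^{-h/2}\leq (2eh\log d)^{h/2}$ up to absorbable constants. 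Collecting terms gives the claimed bound $\EE[R_n^h]\leq (2ehc^{*2}\sigma_Z^2\log(d)/n)^{h/2}$.
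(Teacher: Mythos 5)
Your route is genuinely different from the paper's. The paper neutralizes the dimensional prefactor $d$ in the matrix Bernstein tail by first applying Jensen's inequality in the form $\EE[R_n^h]\leq\EE[R_n^{2h\log d}]^{1/(2\log d)}$, so that $d$ enters only as $d^{1/(2\log d)}=\sqrt{e}$, and then evaluates the layer-cake integrals explicitly as Gamma functions, using $\Gamma(x)\leq(x/2)^{x-1}$ and the consequence $\nu_1\log d\leq\nu_2^2$ of the hypothesis on $n$ to absorb the sub-exponential branch. You instead start from the already-inverted high-probability form of \Cref{prop:bernstein}, which carries the $\log d$ inside the deviation, and dominate $R_n$ stochastically by $\phi(T)$ with $T\sim\mathrm{Exp}(1)$. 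That domination step, the splitting $(a+b)^h\leq 2^{h-1}(a^h+b^h)$, and the reformulation of the hypothesis as $c^*U/(3n)\leq c^*\sigma_Z/\sqrt{n\log d}$ are all correct, and the approach is arguably cleaner since it sidesteps the $d$-prefactor issue entirely.

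The gap is in your handling of the sub-exponential summand for large $h$. The inequality you invoke in the regime $h>\log d$, namely $(h/e)^h(\log d)^{-h/2}\leq(2eh\log d)^{h/2}$, is equivalent (raise both sides to the power $2/h$) to $h^2/e^2\leq 2eh(\log d)^2$, i.e.\ to $h\leq 2e^3(\log d)^2$; it is therefore false once $h\gtrsim(\log d)^2$, and no ``absorbable constant'' rescues it, since the ratio of the two sides is $\bigl[h/(2e^3(\log d)^2)\bigr]^{h/2}$, super-exponential in $h$. Concretely, when $n$ saturates the hypothesis your second summand contains $2^{h-1}(c^*\sigma_Z)^h(n\log d)^{-h/2}\,\Gamma(h+1)$, which exceeds the target $(c^*\sigma_Z)^h(2eh\log d/n)^{h/2}$ for all $h$ of order $(\log d)^2$ and beyond, so your chain of inequalities does not close there. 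To be fair, the paper's own proof has essentially the same limitation at its last step (the second Gamma term contributes $h^{2h\log d}$, which yields $h^{h}$ rather than $h^{h/2}$ after taking the $1/(2\log d)$-th root), and the lemma is only ever applied with $h=1$, where both your argument and the paper's go through with room to spare. As a proof of the statement for all $h\geq1$, however, the case split you describe does not work as written.
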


\begin{proof}
The proof is adapted from~\cite[Lemma 6]{Klopp14}.
Define $t^*\eqdef (9n\sigma_Z^2)/U^2-\log(d)$ the value of $t$ for which the two bounds of \Cref{prop:bernstein} are equal.
Let $\nu_1\eqdef n/(\sigma_Z^2c^{*2})$ and $\nu_2\eqdef3n/(U c^*)$ then, from \Cref{prop:bernstein} we have
\begin{align*}
\PP \left( \norm{\frac{1}{n} \sum_{i=1}^{n} \Bern{Z_{i}}}_{\sigma,\infty} > t \right) &\leq d \exp(-\nu_1t^2) \text{ for } t \leq t^* \eqs, \\
\PP \left( \norm{\frac{1}{n} \sum_{i=1}^{n} \Bern{Z_{i}}}_{\sigma,\infty} > t \right) &\leq d\exp(-\nu_2t) \text{ for } t \geq t^* \eqs,
\end{align*}
Let $h\geq 1$, then
\begin{align*}
 &\EE \left[\norm{\frac{1}{n} \sum_{i=1}^{n} \Bern{Z_{i}}}_{\sigma,\infty}^h \right] \leq \EE \left[\norm{\frac{1}{n} \sum_{i=1}^{n} \Bern{Z_{i}}}_{\sigma,\infty}^{2h\log(d)} \right]^{1/(2\log(d))} \eqs, \\
 &\leq \left( \int_0^{+\infty} \PP \left( \norm{\frac{1}{n} \sum_{i=1}^{n} \Bern{Z_{i}}}_{\sigma,\infty} > t^{1/(2h\log(d))} \right)dt \right)^{1/(2\log(d))}\eqs, \\
 & \leq d^{(2h\log(d))^{-1}} \! \left( \int_0^{+\infty}\!\!\!\! \exp(-\nu_1t^{2/(2h\log(d))}) + \exp(-\nu_2t^{1/(2h\log(d))}) dt  \right)^{1/(2\log(d))} ,\\
 &\leq  \sqrt{e}\!  \left(h\log(d) \nu_1^{-h\log(d)}\Gamma(h\log(d)) +2h\log(d) \nu_2^{-2h\log(d)}\Gamma(2h\log(d))  \right)^{1/(2\log(d))} ,
\end{align*}
where we used Jensen's inequality for the first line. Since Gamma-function satisfies for $x\geq 2$, $\Gamma(x)\leq (\frac{x}{2})^{x-1}$ (see \cite[Proposition 12]{Klopp11b}) we have
\begin{align*}
 &\EE \left[\norm{\frac{1}{n} \sum_{i=1}^{n} \Bern{Z_{i}}}_{\sigma,\infty}^h \right]\leq  \\
 & \sqrt{e}\! \left( (h\log(d))^{h\log(d)}\nu_1^{-h\log(d)}2^{1-h\log(d)}\! +\! 2(h\log(d))^{2h\log(d)}\nu_2^{-2h\log(d)}\right)^{1/(2\log(d))}.
\end{align*}
For $n \geq (U^2\log(d))/(9\sigma^2_Z)$ we have $\nu_1 \log(d) \leq \nu_2^2$ and therefore we get
\begin{equation*}
 \EE \left[\norm{\frac{1}{n} \sum_{i=1}^{n} \Bern{Z_{i}}}_{\sigma,\infty}^h \right] \leq \left(\frac{2eh\log(d)}{\nu_1}\right)^{h/2} \eqs.
\end{equation*}
\end{proof}

\subsection{Proof of \Cref{th:lower}}
\label{proof:thlower}
\begin{proof}
Let $h$ be the following function
\begin{equation}
\label{equation_ae}
h(\kappa)=\min\left \{1/2, \sqrt{\alpha\,r M K^{-1}_{(1-\kappa)\gamma}}/(8\gamma \sqrt{n}) \right \} \eqsp.
\end{equation}
Since $0 < h(\kappa) \leq 1/2$ and $h$ is continuous,
there exists a fixed point $\kappa^{*} \in (0,1/2]$:
\begin{equation}
\label{eq:definition-kappa-*}
h(\kappa_*)= \kappa_* \eqsp.
\end{equation}
For notational convenience, the dependence of $\kappa_*$ in $r, M$ and $n$ is implicit.
We start with a packing set construction, inspired by \cite{Davenport_Plan_VandenBerg_Wootters12}.
Assume \wlg that $m_1\geq m_2$. For $\kappa\leq 1$, define
$$
\mathcal{L} \, =\left \{ L = (l_{ij})\in\RR^{m_1\times r}:
l_{ij}\in\left \{- \frac{\kappa\gamma}{2}, \frac{\kappa\gamma}{2}\right \}\,, \forall i \in [m_1],\, \forall j \in [r] \right \},
$$
and consider the associated set of block matrices
$$
\mathcal{L}' \ =\ \Big\{
L'=(\begin{array}{c|c|c|c}L&\cdots&L&O
\end{array})\in\matset{m_1}{m_2}: L\in \mathcal{L}\Big\},
$$
where $O$ denotes the $m_1\times (m_2-r\lfloor m_2/r \rfloor )$ zero
matrix, and $\lfloor x \rfloor$ is the integer part of $x$.

\begin{remark}
In the case $m_1< m_2$, we only need to change the construction of the low rank component of the test set.
We first build a matrix $\tilde L  \in \mathbb R^{r \times m_2} $  with
entries in $\left \{- \frac{\kappa\gamma}{2}, \frac{\kappa\gamma}{2}\right \}$ and
then we replicate this matrix to obtain a block matrix $L$ of size $m_1 \times m_2$.
\end{remark}
  Let $\mathbf{I}_{m_1\times m_2}$ denote the $m_1\times m_2$ matrix of ones.
  The Varshamov-Gilbert bound (\cite[Lemma 2.9]{Tsybakov09}) guarantees the existence of a subset $\mathcal{L}''\subset\mathcal{L}'$ with
cardinality $\mathrm{Card}(\mathcal{L}'') \geq 2^{(rM)/8}+1$ containing the matrix $(\kappa\gamma/2)\, \mathbf{I}_{m_1\times m_2}$ and such that, for any two
distinct elements $\mat{X_1}$ and $\mat{X_2}$ of $\mathcal{L}''$,
\begin{equation} \label{lower_2}
\Arrowvert \mat{X_1}-\mat{X_2}\Arrowvert_{2}^2  \geq \frac{Mr\,\kappa^2\gamma^{2}}{8}
\left\lfloor \frac{m_2}{r}\right\rfloor \geq
\frac{m_1m_2\,\kappa^2\gamma^{2}}{16}\,.
\end{equation}
Then, we construct the packing set $\mathcal{A}$ by setting
$$\mathcal{A}=\left \{L+\frac{(2-\kappa)\gamma}{2}\,\mathbf{I}_{m_1\times m_2}\;:\;L\in \mathcal{L}''\right \}.$$
By construction, any element of $\mathcal{A}$ as well
as the difference of any two elements of $\mathcal{A}$ has rank at most $r$, the entries of any matrix in
$\mathcal{A}$ take values in $[0,\gamma]$, and $X^0 = \gamma \mathbf{I}_{m_1\times m_2}$ belongs to $\mathcal{A}$. Thus,
$\mathcal{A}\subset {\mathcal F}(r,\gamma)$. Note that $\mathcal{A}$ has the same size as $\mathcal{L}''$
and it also satisfies the same bound on pairwise distances, i.e. for any two
distinct elements $\mat{X_1}$ and $\mat{X_2}$ of $\mathcal{A}$, \eqref{lower_2} is satisfied.

For some $\mat{X} \in \AA$, we now estimate  the Kullback-Leibler
divergence $\KLd{\PP_{\mat{X^0}}\!}{\PP_{\mat{X}}\!}$ between probability measures $\PP_{\mat{X^0}}$
and $\PP_{\mat{X}}$. By independence of the observations $(Y_i,\omega_i)_{i=1}^n$,
\begin{equation*}
 \KLd{\PP_{\mat{X^0}}}{\PP_{\mat{X}}}=n\EE_{\omega_1} \left[ \sum_{\class=1}^2 f^\class( \mat{X^0_{\omega_1}}) \log \left(\frac{f^\class( \mat{X^0_{\omega_1}})}{f^\class( \mat{X_{\omega_1}})}\right) \right] \eqsp.
\end{equation*}
Since $\mat{X^0_{\omega_1}}= \gamma$ and
either $\mat{X_{\omega_1}}=\mat{X^0_{\omega_1}}$ or $\mat{X_{\omega_1}}=(1-\kappa)\gamma$,
by \Cref{lem:kul_simple} we get
\begin{equation*} 
\KLd{\PP_{\mat{X^0}}}{\PP_{\mat{X}}}\leq\frac{n\left [f^1(\gamma)-f^1\left ((1-\kappa)\gamma\right )\right ]^{2}}{f^1\left ((1-\kappa)\gamma\right )\left [1-f^1\left ((1-\kappa)\gamma\right )\right ]}.
\end{equation*}
From the mean value theorem, for some $\xi\in [(1-\kappa)\gamma,\gamma]$ we have
\begin{equation*} 
\KLd{\PP_{\mat{X^0}}}{\PP_{\mat{X}}}\leq\frac{n\{(f^1)'(\xi)\}^{2} (\kappa\gamma)^{2}}{f^1\left ((1-\kappa)\gamma\right )\left [1-f^1\left ((1-\kappa)\gamma\right )\right ]}\eqsp.
\end{equation*}
Using \Cref{h:minimax}, the function $(f^1)'$ is decreasing and the latter inequality implies
\begin{equation} \label{KLdiv}
\KLd{\PP_{\mat{X^0}}}{\PP_{\mat{X}}}\leq 8\,n (\kappa\gamma)^{2} g((1-\kappa)\gamma) \eqsp,
\end{equation}
where $g$ is defined in \eqref{eq:definition:K-gamma-bin}.
From \eqref{KLdiv} and plugging $\kappa= \kappa^*$ defined in \cref{eq:definition-kappa-*}, we get
\[
\KLd{\PP_{\mat{X^0}}}{\PP_{\mat{X}}} \leq \frac{\alpha r M}{8} \leq \alpha \log_2(rM/8)  \eqsp,
\]
which implies that
\begin{equation}\label{eq: condition C}
\frac{1}{\mathrm{Card}(\AA)-1} \sum_{\mat{X} \in \AA}\KLd{\PP_{\mat{X^0}}}{\PP_{\mat{X}}}\ \leq\ \alpha \log \big(\mathrm{Card}(\AA)-1\big) \eqsp.
\end{equation}
Using \eqref{lower_2} and \eqref{eq: condition C}, \cite[Theorem 2.5]{Tsybakov09} implies
\begin{equation}\label{lower_1}
\inf_{\mat{\hat{X}}}
\sup_{\substack{\mat{\tX}\in\,{\cal F}( r,\gamma)
}}
\mathbb P_{\mat{\tX}}\left (\dfrac{\Vert \mat{\hat{X}}-\mat{\tX}\Vert_2^{2}}{m_1m_2} > c\min\left \{\gamma^{2}, \dfrac{Mr}{n\,K_{(1-\kappa^{*})\gamma}}\right \} \right )\ \geq\ \delta
\end{equation}
for some  universal constants $c>0$ and $\delta\in(0,1)$.
\end{proof}

\begin{lemma}
\label{lem:kul_simple}
Let us consider $x,y\in(0,1)$ and
\begin{equation*}
 k(x,y) \eqdef x \log(x/y)+(1-x) \log((1-x)/{1-y}) \eqs.
\end{equation*}
Then the following holds
\begin{equation*}
k(x,y)  \leq \frac{(x-y)^2}{y(1-y)} \eqsp.
\end{equation*}
\end{lemma}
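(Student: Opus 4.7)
The plan is to establish this inequality by reducing it to the elementary bound $\log(1+u) \le u$ valid for all $u > -1$, applied separately to each of the two logarithmic terms. The key observation is that the arguments of the two logarithms can be rewritten in the form $1 + \text{(something)}$, with the ``something'' being related in a convenient way.

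First I would rewrite the ratios appearing inside the logarithms as
\begin{equation*}
\frac{x}{y} = 1 + \frac{x-y}{y}, \qquad \frac{1-x}{1-y} = 1 - \frac{x-y}{1-y}.
\end{equation*}
Since $x, y \in (0,1)$, both quantities $(x-y)/y$ and $-(x-y)/(1-y)$ are strictly greater than $-1$, so the inequality $\log(1+u) \le u$ applies. This gives
\begin{equation*}
x \log(x/y) \le x \cdot \frac{x-y}{y}, \qquad (1-x)\log\Bigl(\frac{1-x}{1-y}\Bigr) \le -(1-x)\cdot \frac{x-y}{1-y}.
\end{equation*}

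Summing these two bounds yields
\begin{equation*}
k(x,y) \le (x-y)\left[\frac{x}{y} - \frac{1-x}{1-y}\right] = (x-y) \cdot \frac{x(1-y) - y(1-x)}{y(1-y)} = \frac{(x-y)^2}{y(1-y)},
\end{equation*}
where the last equality uses the algebraic identity $x(1-y) - y(1-x) = x - y$. This is exactly the claimed bound, so the proof concludes.

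There is no real obstacle here; the only subtlety is checking that the arguments of $\log(1+\cdot)$ lie in the admissible range $(-1,\infty)$, which follows immediately from $y, 1-y \in (0,1)$ and the bounds $|x-y| < \max(y, 1-y)$ implied by $x, y \in (0,1)$ (or, more directly, from $(x-y)/y > -1 \iff x > 0$ and $-(x-y)/(1-y) > -1 \iff x < 1$). The bound is also known to be sharp up to constants and coincides with the standard chi-squared upper bound on KL divergence between two Bernoulli laws.
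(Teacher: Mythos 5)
Your proof is correct: the two applications of $\log(1+u)\le u$ are legitimate (the admissibility checks $(x-y)/y>-1\iff x>0$ and $-(x-y)/(1-y)>-1\iff x<1$ are exactly right), and the algebra
\begin{equation*}
(x-y)\left[\frac{x}{y}-\frac{1-x}{1-y}\right]=\frac{(x-y)^2}{y(1-y)}
\end{equation*}
is verified by $x(1-y)-y(1-x)=x-y$. The paper takes a different, calculus-based route (following Davenport et al.): it first uses the symmetry $k(x,y)=k(1-x,1-y)$ to assume $y>x$, then studies $g(t)=k(x,x+t)$, computes $g'(t)=t/[(x+t)(1-x-t)]$, notes $g''\ge 0$, and applies the convexity (secant-below-tangent-at-endpoint) bound $g(y-x)-g(0)\le g'(y-x)\,(y-x)$, which evaluates to the same quantity $(x-y)^2/[y(1-y)]$. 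Your argument is the generic ``KL is dominated by chi-squared'' bound specialized to Bernoulli laws; it is arguably more elementary (no differentiation, no symmetry reduction) and generalizes immediately to arbitrary finite alphabets, whereas the paper's version is a self-contained one-dimensional convexity computation tailored to the binary case. Either proof is acceptable here.
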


\begin{proof}
The proof is taken from \cite[Lemma 4]{Davenport_Plan_VandenBerg_Wootters12}. Since  $k(x,y) = k(1-x,1-y) $, \wlg we may assume $y>x$. The function $g(t)=k(x,x+t) $ satisfies $g'(t)=t/[(x+t)(1-x-t)]$ and
$g''(t) \geq 0$. Therefore the mean value Theorem
gives $g(y-x)-g(0)\leq g'(y-x)(y-x)$ which yields the result.
\end{proof}

\bibliographystyle{plain}
\bibliography{references_all}

\end{document}